\documentclass{amsart}
  \usepackage{amscd,amssymb,epsfig, epsf}
   \usepackage{epic,eepic}

 \oddsidemargin -0.6cm
                     \evensidemargin -0.6cm
                     \topmargin -1.0cm
                     \headheight 1cm
                     \headsep .5cm
                     \textwidth 12.4cm
                     \textheight 19cm

                     \numberwithin{equation}{subsection}

                     \newtheorem{propo}{Proposition}[section]
                     \newtheorem{corol}[propo]{Corollary}
                  \newtheorem{theor}[propo]{Theorem}
                     \newtheorem{lemma}[propo]{Lemma}
                     \theoremstyle{definition}

                     \theoremstyle{remark}

                     \newcommand{\RR}{\mathbb{R}}

              \newcommand{\card}{\operatorname{card}}

               \begin{document}
      \title{Trimming  of  metric spaces and the tight span}
                     \author[Vladimir Turaev]{Vladimir Turaev}
                     \address{%
              Department of Mathematics, \newline
\indent  Indiana University \newline
                     \indent Bloomington IN47405 \newline
                     \indent USA \newline
\indent e-mail: vtouraev@indiana.edu} \subjclass[2010]{54E35}
                    \begin{abstract}
We use the    trimming transformations to study the tight span of a metric space.
\end{abstract}

\maketitle

\section{Introduction}\label{intro}

%It is well known that any finite graph    deforms onto a subgraph having no vertices of degree one. %Such a  retraction is obtained by    pushing the  degree one vertices  and the adjacent   edges into the rest of the graph.
%The original graph may be recovered  from this  %resulting
 % subgraph   by attaching   trees. %In %particular, every finite tree contracts in the above-described way into a point.
  % We apply   similar ideas to  finite metric spaces.
 The  theory of tight spans due to  J.~Isbell \cite{Is} and A.~Dress \cite{Dr}   embeds any metric space~$X$   in a hyperconvex metric space $T(X)$   called the tight span of~$X$. In this paper we split  $T(X)$   as a  union of two metric subspaces   $\tau= \tau(X) $ and   $\overline {C}=\overline {C(X)} $.  The   space~$\tau  $      is the tight span of a certain  quotient $\overline{X_\infty}$ of~$X$. The   space~$\overline {C } $ is a disjoint union of metric trees which   either do not meet~$\tau  $ or  meet~$\tau  $   at their roots lying in $\overline{X_\infty}\subset \tau$.    In this picture, the original metric space $X\subset T(X)$   consists of the tips of the branches of the trees.

 %As a first approximation, one may think that~$\overline {C}$ is the 1-dimensional part of the tight span $T(X)$, while $\tau(X)$ lies \lq\lq deeper" in $T(X)$.

The construction of   $\tau  $ and~$\overline {C } $    uses    the  trimming transformations of metric spaces   studied   in \cite{Tu}   for finite metric spaces. In the present paper - essentially independent of~\cite{Tu} -  we discuss trimming for all metric spaces  and introduce    related objects including the subspaces $   \tau $  and~$ \overline C$ of $T(X)$. Our main theorem   says that $\tau  \cup \overline C=T(X)$ and   $\tau \cap{\overline C }$ is the set of the  roots of the trees forming~$\overline C$.

This work was partially supported by the NSF grant DMS-1664358​.

  \section{Trim  pseudometric spaces}\label{sectionBIS0}

 \subsection{Pseudometrics}\label{PreliminariesBIS01} We  recall basics on metric and  pseudometric spaces. Set $\RR_+=\{r\in \RR \, \vert \, r\geq 0\}$.
  A \emph{pseudometric space} is a pair   consisting of a set~$X$ and a mapping $d: X\times X \to \RR_+ $ (the \emph{pseudometric}) such that for all $x,y,z\in X$,   $$d(x,x)=0, \quad d(x,y)=d(y,x) , \quad d(x,y)+ d(y,z)\geq d(x,z).$$
  A pseudometric space $(X,d)$ is a \emph{metric space} (and~$d$ is a \emph{metric})  if $d(x,y) >0$ for all  distinct $x, y\in X $.

  A map $f:X\to X'$ between pseudometric spaces  $(X,d)$ and $(X',d')$  is \emph{distance preserving} if $d(x,y) = d'(f(x), f(y))$ for all $x,y \in X$ and is \emph{non-expansive} if $d(x,y) \geq d'(f(x), f(y))$ for all $x,y \in X$. Pseudometric spaces $X,X'$ are \emph{isometric} if there is a   distance preserving bijection $X \to X'$.  We call distance preserving maps between metric spaces   \emph{metric embeddings}; they are always injective.

Each pseudometric space $(X,d)$ carries an equivalence relation $\sim_d$  defined by $x\sim_d y$ if $d(x,y)=0$ for   $x,y\in X$.    The quotient set $ \overline X = X/{\sim_d }$  carries a metric~$\overline d$ defined by
$ {\overline d}(\overline x, \overline y)=d(x,y)$ where   $x,y$ are any points of~$X$ and $\overline x, \overline y \in  \overline X$ are their equivalence classes.  The metric space $(\overline X, \overline d)$ is   the  \emph{metric quotient} of   $(X,d)$. Any distance preserving map from $X$ to a metric space $Y$ expands uniquely as the composition of the projection $X\to \overline X$ and a  metric embedding   $\overline X\hookrightarrow Y$.

  \subsection{Trim spaces}\label{Preliminaries} Given a set~$X$ and a  map  $d:X\times X\to \RR$, we      use the same symbol~$d$ for the map  $X\times X\times X \to \RR $ carrying any triple $x,y,z \in X$ to
$$d(x,y,z)=  \frac{ d(x, y)+ d(x, z)-d(y,z)}{2} . $$
  The   right-hand side   is  called the Gromov product, see, for instance, \cite{BCK}.

%for all $x,y,z\in X$. Note the obvious identity $$d(x,y,z) + d(y,x,z)= (d(x, y)+ d(y,x))/{2}.$$
%Clearly, if $d$ is a pseudometric, then $d(x,y,z) \geq 0$.
A  pseudometric~$d$ in a set~$X$   determines   a function $\underline d: X \to \RR_+ $ as follows:
if $\card (X)=1$, then $\underline d =0$; if $X$ has two  points $x,y$, then $\underline d(x)=\underline d(y)= d(x,y)/2$; if $\card (X)\geq 3$, then for all $x \in X$,
$$ {\underline d} (x)= \inf_{y,z \in X\setminus \{x\}, y \neq z }   d(x, y,z)  . $$
If $\underline d(x)=0$ for all $x\in X$, then we say that the pseudometric space   $(X,d)$ is \emph{trim}.

Following K.~Menger \cite{Me}, we say that a point~$x$ of a pseudometric space $(X,d)$   \emph{lies  between} $y\in X$ and $z\in X$ if $d(y,z)=d(x,y)+d(x,z)$.
 A simple sufficient   condition for  $(X,d)$ to be trim says that each point  $x\in X$ lies between  two  distinct points of  $  X\setminus \{x\}$. %%For finite~$X$, this condition is  equivalent to $(X,d)$ being trim.

\subsection{Examples}   %  It is easy to give examples of trim pseudometric spaces.
A pseudometric space   having only one point  is trim. A pseudometric space   having    two points   is trim iff the distance between these points is equal to zero.
More generally, any  set with  zero pseudometric   is trim.  We  give two   examples of trim   metric spaces:

  (a)  the set of words of a fixed finite length in a given finite  alphabet  with the Hamming distance between words defined as the number of positions at which the   letters of the words   differ;

(b) a  subset   of a  Euclidean circle $C\subset \RR^2 $   meeting each   half-circle in $C$   in at least three points; here the distance between two  points  is the length of the shorter arc  in $C$ connecting these points.

%(c)  the 4-point metric space $ \{a,b,c,e\}$   with metric   defined by
% $$d(a,b)=d(c,e)=r, \,\,\,\, d(a,c)=d(b,e)=s,\,\,\,\,  d(a,e)=d(b,c)=r+s,$$
% where $r,s >0$. This  is a special case of the previous example  arising from   subsets of the circle formed by two pairs of diametrically opposite points.
%
%%For any family of trim metric subspaces of a metric space,   their union is a trim metric space. For example, a subset of a finite-dimensional Euclidean sphere    contained in a union of several big circles and meeting all their half-circles  in at least three points is trim.
%
%
%
% \subsection{Remark}   The class of finite trim metric spaces seems to be rather narrow. In  particular, there are no trim   metric spaces  having just two points or three points.
% A
%finite subset  of a Euclidean space    with the induced metric and with   at least two points  cannot be trim. Indeed, such a subset   must  contain   a pair of points lying at the maximal distance;   these points cannot lie between   other  points of the subset.   Consequently,      finite trim metric spaces having $\geq 2$ points do not   isometrically embed into    Euclidean spaces.

\section{Drift and trimming}

\subsection{The drift}\label{Drift} Given  a  function $\delta: X\to \RR$ on a     metric space $(X,d)$, we define a  map  $d_\delta:X\times X\to \RR$  as follows: for   $x,y \in X$,
  \begin{equation*}
    d_\delta(x,y)=\left\{
                \begin{array}{ll}
                0 \quad {\text {if}} \,\,  x=y , \\
                  d(x,y)-\delta(x)- \delta(y) \quad {\text {if}} \,\,   x \neq y   .
                \end{array}
              \right.
\end{equation*}
 We say that $d_\delta$ is obtained from~$d$ by  a \emph{drift}. The idea behind the definition of $d_\delta$  is that each  point $x\in X$ is pulled towards   all other points of~$X$ by  $\delta(x)$.

%Given a pseudometric space $(X,d)$ and a function $\delta: X\to \RR$ such that $\delta\geq - \underline d$ pointwise, we %define in the next lemma a   pseudometric $d_\delta$ in $X$. %In particular, for $\delta$ we can take any non-negative  function on~$X$. %which pushes each point $x\in X$ away from all other points (or towards all other points) by a fixed amount $\delta(x)\in \RR$. The drift   is

   \begin{lemma}\label{le49} If    $\delta \leq  {\underline d}$ (i.e., if
   $\delta(x)\leq  {\underline d}(x)$ for all $x\in X$), then $ {d_\delta} $ is a   pseudometric in~$X$. Moreover, if $\card(X) \geq 3$ or $\card(X) =2, \delta=const$, then   $\underline{d_\delta}= \underline d-\delta$.
\end{lemma}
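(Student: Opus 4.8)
The plan is to verify the three pseudometric axioms for $d_\delta$ and then to compute $\underline{d_\delta}$ by a direct manipulation of Gromov products. I would begin by recording the elementary identity
\[
d(x,y,z)+d(y,x,z)=d(x,y),
\]
valid for \emph{any} three points $x,y,z\in X$ (it is immediate from the definition of the Gromov product). Together with the definition of $\underline d$ this yields the key inequality $\underline d(x)+\underline d(y)\le d(x,y)$ for all distinct $x,y\in X$: if $\card(X)\ge 3$, pick any $z\in X\setminus\{x,y\}$ and combine $\underline d(x)\le d(x,y,z)$ with $\underline d(y)\le d(y,x,z)$; if $\card(X)=2$ both sides equal $d(x,y)$. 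Since $\delta\le\underline d$, we get $\delta(x)+\delta(y)\le d(x,y)$, hence $d_\delta(x,y)\ge 0$ for all $x,y$ (the cases $x=y$ and $\card(X)\le 1$ being trivial). Symmetry of $d_\delta$ and $d_\delta(x,x)=0$ hold by construction.

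For the triangle inequality $d_\delta(x,y)+d_\delta(y,z)\ge d_\delta(x,z)$ I would first dispose of the degenerate cases: if $x=y$ or $y=z$ it is an equality, and if $x=z$ it reduces to $2\,d_\delta(x,y)\ge 0$, already known. When $x,y,z$ are pairwise distinct, substituting the definition of $d_\delta$ gives
\[
d_\delta(x,y)+d_\delta(y,z)-d_\delta(x,z)=2\bigl(d(y,x,z)-\delta(y)\bigr),
\]
and the right-hand side is $\ge 0$ because $d(y,x,z)\ge\underline d(y)\ge\delta(y)$, the pair $(x,z)$ being admissible in the infimum defining $\underline d(y)$. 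This establishes that $d_\delta$ is a pseudometric.

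For the formula $\underline{d_\delta}=\underline d-\delta$, the same substitution shows that for pairwise distinct $x,y,z$ one has $d_\delta(x,y,z)=d(x,y,z)-\delta(x)$; taking the infimum over admissible pairs $(y,z)$ gives $\underline{d_\delta}(x)=\underline d(x)-\delta(x)$ whenever $\card(X)\ge 3$. When $\card(X)=2$ and $\delta\equiv c$ is constant, write $X=\{x,y\}$ and compute directly $\underline{d_\delta}(x)=d_\delta(x,y)/2=(d(x,y)-2c)/2=\underline d(x)-c$, and likewise for $y$.

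I expect the only genuine point of care to be the non-negativity of $d_\delta$, i.e.\ the inequality $\delta(x)+\delta(y)\le d(x,y)$: the hypothesis $\delta\le\underline d$ only bounds $\delta(x)$ and $\delta(y)$ \emph{separately} by $\underline d(x)$ and $\underline d(y)$, so one really needs the additivity identity $d(x,y,z)+d(y,x,z)=d(x,y)$ to combine the two bounds. Everything else is routine algebra with the Gromov product; and the constancy hypothesis in the case $\card(X)=2$ is precisely what forces $-(\delta(x)+\delta(y))/2$ to equal $-\delta(x)$, which is why it cannot be dropped.
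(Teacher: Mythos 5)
Your proof is correct and follows essentially the same route as the paper: the key inequality $\underline d(x)+\underline d(y)\le d(x,y)$ via the identity $d(x,y,z)+d(y,x,z)=d(x,y)$, the triangle inequality reduced to non-negativity of a shifted Gromov product, and the two-case computation of $\underline{d_\delta}$. Your display $d_\delta(x,y)+d_\delta(y,z)-d_\delta(x,z)=2\bigl(d(y,x,z)-\delta(y)\bigr)$ is just the paper's identity $d_\delta(x,y,z)=d(x,y,z)-\delta(x)$ with the roles of the points relabeled.
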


\begin{proof} We first prove that for any distinct $x,y \in X$,
\begin{equation}\label{eqsimple} {\underline d} (x) + {\underline d} (y) \leq d(x,y).  \end{equation}
If $X=\{x,y\}$, then \eqref{eqsimple} follows   from  the definition of~$\underline d$. If $\card(X)\geq 3$,   pick any $z\in X\setminus \{x,y\}$. Then
${\underline d} (x) \leq   { d(x, y ,z)} $
and  ${\underline d} (y) \leq   d(y,x ,z) $.
So,
$${\underline d} (x) + {\underline d} (y) \leq d(x,y,z) + d(y,x,z)=d(x,y).$$

We   now check that   ${d_\delta}:X \times X\to \RR$ is a pseudometric. Clearly,   ${d_\delta}$ is symmetric and,  by   definition, ${d_\delta}(x,x)=0$ for all $x\in X$. Formula \eqref{eqsimple} and the assumption $\delta \leq  {\underline d}$ imply that for any distinct $x,y\in X$,
$${d_\delta}(x,y)=d(x,y)-\delta(x)- \delta(y)\geq {\underline d} (x)  -\delta(x)+  {\underline d} (y) - \delta(y) \geq 0.$$
To prove the triangle inequality for $d_\delta$ we rewrite it as $d_\delta(x,y,z)\geq 0$
  for any
$x,y,z\in X$. If $x=y$ or $x=z$, then   $d_\delta(x,y,z)  =0$; if $y=z$, then  $d_\delta(x,y,z)=d_\delta(x,y) \geq 0$. Finally, if $x,y,z$ are pairwise distinct, then
 \begin{equation}\label{ert} d_\delta(x,y,z)  =  d(x,y,z) -  \delta(x) \geq  {\underline d} (x) -  \delta(x) \geq 0  . \end{equation}

The equality $\underline{d_\delta} (x)= \underline d (x) -\delta (x)$  for all $x\in X$ follows   from \eqref{ert} if $\card(X)\geq 3$ and from the definitions
if $\card(X)=2$ and $\delta=const$.
\end{proof}

\subsection{Trimming}\label{Trimming} Let $(X,d)$ be a  metric space.
Applying  Lemma~\ref{le49} to  $\delta= \underline d:X\to \RR$, we obtain a pseudometric $d^{-}=d_\delta$   in~$X$. For $x,y \in X$,
  \begin{equation}\label{dbulletdef}
    d^{-}(x,y)=\left\{
                \begin{array}{ll}
                0 \quad {\text {if}} \,\,  x=y  , \\
                  d(x,y)- \underline d(x)- \underline d(y) \quad {\text {if}} \,\,   x \neq y.
                \end{array}
              \right.
\end{equation}
  Lemma~\ref{le49} implies that    $\underline{d^{-}}=0$, i.e.,    that   $(X, d^{-})$ is a trim pseudometric space.
Let $t(X)$ be   the metric quotient
      of   $(X, d^{-})$.  By definition, $t(X)=X/{\sim_{d^{-}}}$ and the metric in $t(X)$ is induced by the pseudometric $d^{-}$ in~$X$. We    say that    $t(X)$ is obtained from $(X,d)$ by \emph{trimming}  and call
  the projection $p_X:X  \to   t(X) $ the \emph{trimming projection}. Note that $p_X$ is  a   non-expansive surjection of metric spaces.  It   is bijective   if and only if    ${d^{-}}$   is a metric in~$X$ and then    $t(X)  =(X, d^{-}) $ is a trim metric space. In general,   $t(X)$   may be non-trim.

  Starting from $ (X_0,d_0)=(X,d)$ and iterating the trimming,     we  obtain   metric spaces  $\{  t^k(X)=(X_k, d_k)\}_{k\geq 1}$  and non-expansive surjections
   \begin{equation}\label{line} X =X_0   \stackrel{p_0}{\longrightarrow} X_1 \stackrel{p_1}{\longrightarrow} X_2  \stackrel{p_2}{\longrightarrow}  \cdots    \end{equation}  where   $p_0=p_X$ and     $p_k=p_{t^k(X)}: X_k \to X_{k+1}   $ for   $k\geq 1$. We call  the   sequence   \eqref{line} the \emph{trimming sequence} of~$X$.
Each point $x \in X $ determines    points   $(x_{(k)} \in X_k)_{k\geq 0}$   by   $x_{(0)}=x$ and $x_{(k+1)}=p_k(x_{(k)})$ for  all~$k$. The sequence $(x_{(k)}  )_{k\geq 0}$ is the \emph{trimming sequence} of~$x$.

 %For any $k\geq 1$ consider the non-expansive surjection
 % $$q_k= p_{k-1}\circ \cdots p_1 \circ p_0: X\to X_k = (X_k, d_k). $$
   For   $x,y \in X$, we  write $x \cong y$ if there is an integer $k\geq 0$ such that $x_{(k)}=y_{(k)}$. We call the smallest such~$k$ the \emph{meeting index} of $x, y$ and denote it by $m(x,y)$.  Note that   $x_{(k)} =y_{(k)} $ for all $k \geq m(x,y)$ which easily implies that $\cong$ is an equivalence relation in~$X$. Clearly, $m(x,x)=0$   and $m(x,y) \geq 1$ for $x\neq y$.

    Set $X_\infty=X/\cong$ and for  $x\in X$, denote its projection to $X_\infty$ by $x_{(\infty)}$.  We have
 $$d(x,y) \geq d_1(x_{(1)} , y_{(1)} ) \geq d_2(x_{(2)} , y_{(2)} ) \geq \cdots \geq 0  $$
  for any $x,y \in X$. As a consequence, the formula
 $$d_\infty (x_{(\infty)},y_{(\infty)})= \lim_{k\to \infty}   d_k(x_{(k)}, y_{(k)}) $$
 defines a pseudometric $d_\infty$ in  $X_\infty$. %We view the pseudometric space $(X_\infty, d_\infty)$   as a limit
   %of the trimming sequence \eqref{line}.
   Clearly, the projection $ X\to X_\infty, x\mapsto x_{(\infty)}$ is a non-expansive surjection.

   % For any $k\geq 1$, it  splits as a composition of the projection $ X\to X_k$ with  a non-expansive surjection
%  $X_k \to X_\infty$.

\subsection{Remark} One can take the metric quotient $\overline {X_\infty}$ of $X_\infty$ and then iterate the   construction $X\mapsto \overline {X_\infty}$. We will not do it in this paper.

\section{Leaf spaces of metric trees}\label{examplesfmt}

  We compute the trimming for the  leaf spaces  of    metric trees.

 \subsection{Metric  graphs and trees}\label{Trees}    A \emph{metric graph} is a   graph without loops  whose   every  edge~$e$ is endowed with a      real number $\ell_e > 0$, the  \emph{length},  and with a homeomorphism   $e \approx [0, \ell_e] \subset \RR_+ $.  We use this homeomorphism to define the length of any subsegment of~$e$: this is just the  length of the corresponding subsegment of  $[0, \ell_e]  $.  A wider class of \emph{pseudometric graphs}  is defined   similarly but allowing the lengths of  edges   to be equal to zero. Only the edges  of positive length carry a homeomorphism   $   e \approx [0, \ell_e]  $. All subsegments of edges of zero length   have zero length. %The \emph{leaf space}   of a   5 %graph~$\eta$ is  the
 % set of all   vertices of~$\eta$   having degree~$1$, i.e.,   adjacent to a single edge.

%A \emph{metric tree} is a metric graph which is path connected and has no cycles. More generally,

A
\emph{pseudometric} (respectively, \emph{metric}) tree  is a pseudometric (respectively,  metric) graph which is path connected and has no cycles.  The underlying set of a  pseudometric tree~$L$  carries a \emph{path pseudometric} $d_L$: the distance $d_L(x,y) $ between any points $x,y  \in L$ is the sum of the lengths of the edges and subedges forming an  injective path from~$x$ to~$y$   in~$L$. The pseudometric $d_L$ is a metric   if and only if~$L$ is a metric tree. Every pseudometric tree~$L$ yields a metric tree~$\overline L$    by contracting each edge of~$L$ of zero length   to a point while keeping the homeomorphisms associated with the edges of positive length. The underlying metric space $({\overline L}, d_{\overline L})$ of~$\overline L$ is nothing but the metric quotient of   $(L,d_L)$.

 \subsection{The leaf space}\label{Leaf} A    \emph{leaf}   of a pseudometric tree~$L$  is a vertex of~$L$   adjacent to a single edge of~$L$.     The \emph{leaf space} $ {\partial L}$    is the set of all   leaves of~$L$      together with the     pseudometric~$d$ obtained by restricting $d_L$ to this set.
   The pseudometric space $(\partial L,d)$ depends only on the tree~$L$ and the lengths of its edges; it does not depend  on the choice of   homeomorphisms $   e \approx [0, \ell_e]  $.   %In fact, the restriction of $d_L$ to the set of vertices of~$L$ depends only on the lengths of the edges.
   The following lemma estimates (and in some cases computes) the function $\underline d:\partial L \to \RR_+$.

%The \emph{total space} ${\rm Tot}(L)$ of~$L$  is the disjoint union of the components     with the following    pseudometric $d_L$:
% for any  $x\in { L_a}  , y\in {  L_b}$  with   $a, b \in B$,
%\begin{equation}\label{defdzeta}
%    d_L(x,y)=\left\{
%                \begin{array}{ll}
%                  d_{L_a} (x, y) \quad {\text {if}} \,\,   a=b , \\
%                  d_{L_a} (x, \ast_a) + d_B(a,b) + d_{L_b} (y, \ast_b ) \quad {\text {if}} \,\,  a \neq b  .
%                \end{array}
%              \right.
%\end{equation}
%  Note   that the formula $a\mapsto \ast_a$ defines an isometry of~$B$ onto the subspace of ${\rm Tot}(L)$ formed by the roots of the components.

     \begin{lemma}\label{cloleththree--} Let $(\partial L,d)$ be  the leaf space of a  pseudometric tree~$L$ such that $\card (\partial L) \geq 3$.  Then for all   $x\in \partial L$, we have   ${\underline d}(x) \geq l_{e(x)} $  where $e(x) $ is the   edge of~$L$ adjacent to~$x$. If the second vertex of $e(x)$   is adjacent   to at least two   leaves   of $L$ besides~$x$, then ${\underline d}(x) = l_{e(x)} $.
  \end{lemma}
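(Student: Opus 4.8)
The plan is to translate the Gromov products that enter the definition of $\underline d$ on the leaf space into distances inside the tree $L$, localised near the leaf $x$. Write $v$ for the second vertex of $e(x)$, so that $e(x)$ joins $x$ to $v$. Since $x$ is a leaf, $v$ separates $x$ from every other point of $L$: the simple path in $L$ from $x$ to any point $w\neq x$ begins along the edge $e(x)$ and then stays in $L$ with the edge $e(x)$ removed. The first step is to record the consequences of this. For all $y,z\in\partial L\setminus\{x\}$ one gets $d_L(x,y)=\ell_{e(x)}+d_L(v,y)$ and $d_L(x,z)=\ell_{e(x)}+d_L(v,z)$, whereas the simple path from $y$ to $z$ cannot pass through the degree-one vertex $x$, hence avoids $e(x)$, so $d_L(y,z)$ is computed inside $L$ with $e(x)$ removed and is unchanged. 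Substituting these three equalities into the definition of the Gromov product yields the single identity
\[
 d(x,y,z)=\ell_{e(x)}+d_L(v,y,z)\qquad\text{for all } y,z\in\partial L\setminus\{x\}.
\]

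Granting this identity, the two assertions follow quickly. For $\underline d(x)\ge \ell_{e(x)}$: the quantity $d_L(v,y,z)=\tfrac12\bigl(d_L(v,y)+d_L(v,z)-d_L(y,z)\bigr)$ is a Gromov product of the pseudometric $d_L$, hence non-negative by the triangle inequality, so $d(x,y,z)\ge \ell_{e(x)}$ for every admissible pair $y,z$; since $\card(\partial L)\ge 3$ there is at least one such pair, and passing to the infimum gives the claim. For the equality case, suppose $v$ is adjacent to two distinct leaves $y,z$ of $L$ other than $x$. Then $e(x),e(y),e(z)$ are three distinct edges at $v$ (a leaf determines its incident edge, so equality of two of them would force equality of the corresponding leaves), the simple path from $y$ to $z$ is $e(y)\cup e(z)$, and $d_L(y,z)=\ell_{e(y)}+\ell_{e(z)}=d_L(v,y)+d_L(v,z)$. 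Hence $d_L(v,y,z)=0$ and $d(x,y,z)=\ell_{e(x)}$ by the displayed identity, so $\underline d(x)\le \ell_{e(x)}$; combined with the lower bound this gives $\underline d(x)=\ell_{e(x)}$.

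The only real care needed is in the tree bookkeeping behind the displayed identity — confirming that $v$ separates the leaf $x$ from the rest of $L$ and that simple paths joining points other than $x$ avoid the edge $e(x)$ — and in phrasing this so that it also covers edges of length zero, where one cannot speak of the interior of the edge and the argument has to be made combinatorially on $L$ rather than geometrically. Beyond that, the statement is essentially the standard fact that in a tree the Gromov product equals the distance to the median, read off at the leaf $x$, so no further difficulty is expected.
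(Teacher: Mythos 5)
Your proof is correct and follows essentially the same route as the paper: decompose $d_L(x,\cdot)=\ell_{e(x)}+d_L(v,\cdot)$ using that every injective path from the leaf $x$ passes through $v$, observe that the resulting Gromov product at $v$ is non-negative, and in the equality case compute $d(x,y,z)=\ell_{e(x)}$ for two leaves $y,z$ adjacent to $v$. The only cosmetic difference is that you isolate the identity $d(x,y,z)=\ell_{e(x)}+d_L(v,y,z)$ explicitly, which the paper writes out inline.
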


 \begin{proof} Let $v$ be the   vertex of the edge $e(x)$ distinct from~$x$.  Pick any   $y,z \in \partial L \setminus \{x\}$. It is clear that an injective path from $x$ to~$y$ passes through~$v$. Therefore
 $ d_L (x,y)=l_{e(x)} + d_L (v,y) $ and, similarly,  $d_L  (x,z)=l_{e(x)} + d_L (v,z) $.
 Then
  $$ d(  x,y,z)=  \frac{ d_L(x,y)+ d_L(x,z)-d_L(y,z)}{2} $$
  $$=l_{e(x)} +\frac{ d_L(v,y)+ d_L(v,z)-d_L(y,z)}{2} \geq l_{e(x)} .$$
  Since this holds for all $y,z $ as above, ${\underline d}(x) \geq l_{e(x)} $. Suppose now that~$v$   is adjacent   to  distinct   leaves  $y,z \in \partial L \setminus \{x\}$. Then the edges $e(x)$, $e(y)$ form an injective path from $x$ to~$y$ and so $d_L(x,y)= l_{e(x)} +l_{e(y)}$. Similarly, $d_L(x,z)= l_{e(x)} +l_{e(x)}$ and $d_L(y,z)= l_{e(y)} +l_{e(z)}$. Then $ d(  x,y,z)= l_{e(x)}$. Therefore ${\underline d}(x) = l_{e(x)} $.
 \end{proof}

 \subsection{Example} Let~$L$ be a metric tree whose every vertex  adjacent to a leaf is   adjacent to at least three leaves.  Lemma~\ref{cloleththree--}  implies that  $t(\partial L)$  is   the set of all vertices of~$L$ adjacent to a leaf with the metric   $d_L$ restricted to this set.

 %Under   further assumptions on~$L$, this lemma allows us to explicitly describe the trimming of $\partial L$. Namely, assume   that~$L$ is a metric tree whose every vertex   adjacent to a leaf is   adjacent to at least three leaves.  Then  $t(\partial L)$  is   the set of all vertices of~$L$ adjacent to a leaf with the metric   $d_L$ restricted to this set.

 % We describe   a metric tree and compute the   trimming sequence of its leaf space.

 \subsection{Example} \label{A  construction of rooted forests} Consider   an infinite  sequence of sets and surjective maps
 \begin{equation}\label{line++} X_0   \stackrel{p_0}{\longrightarrow} X_1 \stackrel{p_1}{\longrightarrow} X_2 \stackrel{p_2}{\longrightarrow}  \cdots  .  \end{equation}
 Fix any  functions $\{\delta_k :X_k\to (0, \infty)\}_{k\geq 0} $. Consider the metric graph~$L$ with the set of vertices   $\amalg_{k\geq 0}  X_k$ and with each      $v\in X_k$    connected to   $p_k(v)\in X_{k+1}$ by an edge  of length $ {\delta_k} (v)$. Assume   that    $\card (p_{k}^{-1} (a)) \geq 3$ for  all $k\geq 0$ and $a\in X_{k+1 }$   and that for any distinct $x,y \in X$ there is an integer $m=m(x,y)\geq 1$ such that $$(p_m \cdots p_1 p_0)(x)= (p_m \cdots p_1 p_0)(y) .$$
 Then $L$ is a metric tree. For all $k\geq 0$, restricting the  tree  metric $d_L$   to $X_k\subset L$, we obtain a  metric space   $ (X_k, d_k)$ where   $d_k$ is  computed by
 $$d_k(x,y)= \sum_{i=k}^{m-1} \big ( \delta_i((p_{i-1} \cdots p_k)(x)) +\delta_i((p_{i-1} \cdots p_k)(y)) \big)$$ for any  distinct   $x,y\in X_k$, where $m>k$ is the smallest integer such that
 $$(p_{m-1} \cdots p_k)(x)  = (p_{m-1} \cdots p_k)(y).$$
 Lemma~\ref{cloleththree--} implies that   $ \underline {d_k}=\delta_k$ and
   $(X_{k+1}, d_{k+1})$ is obtained from $(X_k, d_k)$ by trimming: $ (X_{k+1}, d_{k+1})=t(X_k, d_k)$.
   Then the sequence \eqref{line++} is    the trimming sequence of $\partial L=(X_0,d_0)$ and $X_\infty$ is a singleton (i.e.,  $\card (X_\infty)=1$).

\subsection{Example} \label{A ++ construction of rooted forests} We describe sets and   maps satisfying the conditions of Example~\ref{A  construction of rooted forests}. Let~$A$ be a set with $ \geq 3$ elements and let  $a\in A$. For  $k\geq 0$, let $X_k$ be the set of all infinite sequences $a_0,a_1,...$ of elements of $A$ such that $a_n=a$ for all sufficiently big~$n$. The map $p_k\colon X_k \to X_{k+1}$ drops the first element of a sequence. Given    functions $\{\delta_k :X_k\to (0, \infty)\}_{k\geq 0} $,   Example~\ref{A  construction of rooted forests} provides, for each $k\geq 0$, a metric $d_k$ in $X_k$ such that $ (X_{k+1}, d_{k+1})=t(X_k, d_k)$ for all~$k$.

 \section{The trimming cylinder}\label{trimmcylinder}

 We introduce   the   trimming cylinder   of a metric space. %We begin with   preliminaries on  metric graphs.

  \subsection{The cylinder~$C$}  Consider a   metric space $(X,d)=(X_0, d_0)$ and its trimming sequence \eqref{line} where   $ t^k(X)=(X_k, d_k)$ for all $k\geq 1$. We define a   graph $C=C(X)$: take the disjoint union $\amalg_{k\geq 0}  X_k$ as the set of vertices and connect each     $v\in X_k$    to    $p_k(v)\in X_{k+1}$ by an edge $e_v$ of length $ {\underline {d_k} } (v)\geq 0$.  If $  {\underline {d_k} } (v)> 0$, then we take for $e_v$ a copy of the segment $[0,  {\underline {d_k} } (v) ]\subset \RR_+$. Clearly, $C$ is a pseudometric graph and   $X =X_0$ is the set of its leaves. We call~$C$   the \emph{trimming cylinder} of~$X$.

  It is clear that all (path connected) components of $C$ are trees. Two points $x,y\in X \subset C$ lie in the same   component of~$C$ if and only if $x \cong y$. We can therefore identify the set  $\pi_0(C)$ of  components of~$C$ with $X_\infty=X/\cong$.

 % \subsection{The pseudometric~$\rho$}

 Each  component  of~$C$, being a pseudometric tree,  carries the  path pseudometric. Also, the
  set    $X_k \subset C$     carries the  metric $d_k$ for all $k\geq 0$.   The following theorem  extends all these pseudometrics and metrics to a  pseudometric  in~$C$.

 \begin{theor}\label{thmain}  There is a unique pseudometric $\rho$ in~$C $ which restricts to the  metric~$d $ in $X\subset C$, restricts to the path pseudometric in every   component  of~$C$,    and   is minimal in the class of such pseudometrics in~$C$. For all $k\geq 1$,  we have $\rho\vert_{X_k}=d_k$.
  \end{theor}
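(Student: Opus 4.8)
The natural approach is to construct $\rho$ explicitly as a ``shortest path'' pseudometric and then verify the three required properties together with minimality and the restriction $\rho|_{X_k}=d_k$. For any two points $a,b\in C$, define $\rho(a,b)$ to be $+\infty$ if $a,b$ lie in different components of $C$ (we will see this does not actually occur once we reinterpret, or rather we simply define $\rho(a,b)=d_L(a,b)$ along the tree and show it is finite), and otherwise to be the infimum over all finite chains $a=c_0,c_1,\dots,c_n=b$ of $\sum_{i} w(c_{i-1},c_i)$, where $w$ is the ``elementary cost'' function: $w(u,v)$ equals the path-pseudometric distance if $u,v$ lie in a common edge of $C$, and $w(x,y)=d(x,y)$ if $x,y\in X=X_0$. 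One checks this infimum is always realized because the only ``shortcuts'' available beyond the tree metric come from the $X$-to-$X$ distances, and these only shorten paths between leaves. The first task is to show $\rho$ is a pseudometric (symmetry and the triangle inequality are automatic for an infimum-of-chains construction; one needs $\rho(a,a)=0$, which is clear).

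**Key steps.** First I would establish the restriction $\rho|_X=d$. The inequality $\rho(x,y)\le d(x,y)$ for $x,y\in X$ is immediate from the one-step chain. For the reverse inequality, I would show that any chain from $x$ to $y$ in $C$ has weight at least $d(x,y)$; this is where the structure of the trimming cylinder enters. A chain passing through $X_k$ at ``height $k$'' picks up, along the vertical edges, exactly the quantities $\underline{d_0}(x_{(0)})+\underline{d_1}(x_{(1)})+\cdots$ and symmetrically for $y$, and then an $X_k$-to-$X_k$ distance $d_k$; by the telescoping identity $d_k(x_{(k)},y_{(k)})=d(x,y)-\sum_{i<k}(\underline{d_i}(x_{(i)})+\underline{d_i}(y_{(i)}))$ (which is exactly formula \eqref{dbulletdef} iterated), such a chain has weight exactly $d(x,y)$; more general chains only cost more by the triangle inequality in each $X_k$. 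This same telescoping computation simultaneously yields $\rho|_{X_k}=d_k$ for all $k\ge 1$: a point $v\in X_k\subset C$ is a leaf of the truncated-at-height-$k$ subtree only in a degenerate sense, but one computes $\rho(v,w)$ for $v,w\in X_k$ by reducing to the leaf case via the fibers of positive size guaranteed implicitly, or more directly by noting that the vertical edges above $X_k$ reconstruct the same telescoping sum with $d_0$ replaced by $d_k$.

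**Restriction to components and minimality.** That $\rho$ restricts to the path pseudometric on each component is the assertion that no $X$-to-$X$ shortcut strictly shortens the tree distance between two arbitrary points of a component; since any injective path in the tree between interior points extends to a path between two leaves, and on leaves we already know $\rho=d$ agrees with the tree distance restricted to $\partial(\text{component})$ (because $d_L$ restricted to leaves \emph{is} $d$ by construction of $C$), monotonicity of path-length under restriction does the rest. Finally, minimality: if $\rho'$ is any pseudometric on $C$ restricting to $d$ on $X$ and to the path pseudometric on each component, then for any $a,b\in C$ and any chain $a=c_0,\dots,c_n=b$ we have $\rho'(a,b)\le\sum_i\rho'(c_{i-1},c_i)\le\sum_i w(c_{i-1},c_i)$, because each elementary step $w(c_{i-1},c_i)$ is by definition either a path-pseudometric distance (hence $\ge\rho'$ of that pair, as $\rho'$ restricts to it) or a $d$-distance between points of $X$ (hence equal to $\rho'$ of that pair); taking the infimum gives $\rho'\le\rho$. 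Uniqueness then follows since $\rho$ itself satisfies the three properties, so any minimal such pseudometric equals $\rho$.

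**Main obstacle.** The delicate point is the lower bound $\rho(a,b)\ge(\text{path pseudometric})$ and in particular $\rho(x,y)\ge d(x,y)$ for leaves: one must rule out that a clever chain weaving up and down through many levels $X_k$, alternately using vertical path-segments and horizontal $d_k$ (or rather $d$ on $X_0$) steps, could beat $d(x,y)$. The resolution is the telescoping identity above combined with the triangle inequality in each $d_k$, but organizing the bookkeeping for an arbitrary (not monotone in height) chain — showing it can be rectified to a monotone one without increasing weight — is the real content, and I expect to spend most of the proof there.
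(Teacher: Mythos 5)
Your construction produces the wrong object: the infimum-over-chains (shortest-path / gluing) recipe yields the \emph{maximal} pseudometric compatible with the prescribed data, whereas the theorem asks for the \emph{minimal} one. Your own closing paragraph betrays this: for an admissible $\rho'$ you derive $\rho'(a,b)\le\sum_i w(c_{i-1},c_i)$ and conclude $\rho'\le\rho$ --- that is maximality of your $\rho$, not minimality. The two notions genuinely differ here, and the difference sits exactly where the claim $\rho\vert_{X_k}=d_k$ lives. Take $v=x_{(k)}$ and $w=y_{(k)}$ with $k\ge 1$ lying in \emph{different} components $L,M$ of~$C$. Any chain from $v$ to $w$ must cross between components through an $X_0$-to-$X_0$ step, so it must climb from $v$ to some leaf $x'$ with $x'_{(k)}=v$ (cost $d_L(x',v)=\sigma^k(x')$), jump (cost $d(x',y')=d_k(v,w)+\sigma^k(x')+\sigma^k(y')$, by the very telescoping identity you quote), and descend from $y'$ to $w$ (cost $\sigma^k(y')$). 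The total is $d_k(v,w)+2\sigma^k(x')+2\sigma^k(y')$, strictly larger than $d_k(v,w)$ unless some leaf sits at zero vertical distance above each of $v$ and $w$. So your $\rho$ fails the second assertion of the theorem and is not the minimal admissible pseudometric; no amount of ``rectifying'' chains will repair this, because the obstruction is in the definition, not the bookkeeping.

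The missing idea is that distinct components must be joined \emph{at infinity} rather than through the leaves. The paper sets $\rho(a,b)=d_\infty(q(a),q(b))+\sigma(a)+\sigma(b)$ for $a,b$ in different components, where $\sigma(a)$ is the total length of the descending ray from~$a$; the convergence of this series when $\card(X_\infty)\ge 2$ is a separate lemma, and neither $\sigma$ nor $d_\infty$ appears anywhere in your sketch. Minimality of this $\rho$ is then obtained not by exhibiting cheap chains but from the \emph{reverse} triangle inequality: climbing from $a,b$ up to leaves $x,y$, any admissible $\rho'$ satisfies $\rho'(a,b)\ge\rho'(x,y)-\rho'(x,a)-\rho'(y,b)=d(x,y)-d_L(x,a)-d_M(y,b)=\rho(a,b)$. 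Your treatment of points within a single component and of $\rho\vert_{X_0}=d$ is essentially sound and agrees with the paper's connected case, but the cross-component case --- the actual content of the theorem when $\card(X_\infty)\ge 2$ --- is handled by a construction that cannot work.
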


  The minimality of $\rho$ means that for any pseudometric $\rho'$ in $C$ which restricts to~$d$ in~$X$ and to the path pseudometrics in the  components  of~$C$, we have   $\rho(a,b) \leq \rho'(a,b)$ for all $a,b \in C$. The uniqueness of such~$\rho$ is obvious;  we  only need to prove the existence and   the equality $\rho\vert_{X_k}=d_k$. We do it   in Section~\ref{proof1}.

 In the proof of Theorem~\ref{thmain} we will use a partial order in~$C$ defined as follows. We say that a path in~$C$ \emph{goes down} if whenever it enters an edge    connecting a point of $X_k$ to a point of $X_{k+1}$, it goes in this edge in the direction from $  X_k$ to $  X_{k+1}$.  We say that a point $b\in C$ lies \emph{below}   a point $a\in C  $   if $a \neq b$ and there is a path in~$C$ going down  from~$a$ to~$b$. %In particular, such points necessarily lie in the same component of~$C$.
  For example, for   $x\in X$ and integers $k < l$, the point $x_{(l)}\in X_l  $ lies below   $x_{(k)}\in X_k  $.

  \subsection{The metric space $\overline C$}\label{overlC} Consider  now the metric quotient $\overline C=(\overline C, \overline \rho)$ of the trimming cylinder  $(C=C(X), \rho)$. Since~$X$ is a metric space, the embedding  $X =X_0 \subset C$   induces a  metric embedding $X\hookrightarrow \overline C$. We   view~$X$ as a  subspace of~$\overline C$ via this embedding.

 We say that  a vertex $x_{(k)}\in X_k$ of~$C$  is \emph{special} if for all $l\geq k$, the edge  of~$C$ connecting $x_{(l)}$ to $x_{(l+1)}$ has zero length, i.e.,
 ${\underline {d_l}} (x_{(l)})=0$.
  A  component~$L$   of~$C$ is \emph{special} if it has at least one  special vertex.
   Then all special vertices of~$L$  and  all edges  between them form a  tree   which   collapses into a   single vertex  in $\overline L\subset \overline C$. We  call  the latter vertex   the \emph{root}  of~$\overline L$.

    We can now describe~$\overline C$. If   $\card (X_\infty)=1$, then $(C, \rho)$ is a pseudometric tree with the path pseudometric and $(\overline C, \overline \rho)$   is the associated metric tree with the path metric.
If $\card (X_\infty)\geq 2$,  then  $(\overline C, \overline \rho)$    is obtained from $(C, \rho)$ in two steps. First,  each component~$L$ of~$C$ is replaced with its metric quotient - the  metric  tree~$\overline L$ (which has a distinguished root if~$L$ is special).    Second, for all $u,v\in  X_\infty =\pi_0(C)$ such that $d_\infty (u,v)=0$ and
  both corresponding components $L_u, L_v$ of~$C$ are special, we identify (glue) the roots
 of  $\overline{L_u}$ and $ \overline {L_v}$. This gives a metric graph  whose   components are   trees. We endow this graph with the unique metric   such that the projection from~$C$ to this graph is distance preserving.  The  resulting metric space    is  $(\overline C, \overline \rho)$ as is clear from the definition of~$\rho$   in Section~\ref{proof2}.

 \subsection{Example} The trimming cylinders arising  in Examples~\ref{A  construction of rooted forests} and \ref{A ++ construction of rooted forests} have no special vertices.   If we modify  the assumptions  there   to $\delta_k(X_k) \subset (0,  \infty)$   for $k=0,1,..., N$ with some   $N\geq 0$ and $\delta_k=0$ for all $k>N$, then the  trimming sequence of  the metric space $(X_0, d_0)$ consists of the   metric spaces $\{(X_k, d_k)\}_{k=0}^{N}$, the set $X_{N+1} $ with zero pseudometric,  and  singletons corresponding to all $k>N+1$. All the vertices of the associated trimming cylinder belonging to $X_{N+1} $ are special.

  \section{The series~$\sigma$ and proof of Theorem~\ref{thmain}}\label{proof1}

 In this section, as above, $X=(X,d)$ is a metric space.

   \subsection{The series~$\sigma$}\label{The series} For each $x\in X$, consider the infinite series
  \begin{equation}\label{almotropdfdfdfds---} \sigma  (x)= \sum_{k=0}^{\infty} \,  \underline{d_k} (x_{(k)} )=\underline{d} (x ) + \sum_{k=1}^{\infty} \,  \underline{d_k} (x_{(k)} )  . \end{equation} Here    $(x_{(k)} \in X_k)_{k\geq 0}$ is the trimming sequence of~$x$ and $\underline{d_k}: X_k \to \RR_+$ is the   function induced by  the metric $d_k$  in $X_k$.  For  any $n \geq 1$,  set
   \begin{equation}\label{almotropdfdfdfds---part} \sigma^n  (x)= \sum_{k=0}^{n-1} \,  \underline{d_k} (x_{(k)} )
   = \underline{d} (x ) + \sum_{k=1}^{n-1} \,  \underline{d_k} (x_{(k)} ).   \end{equation}
  This gives a well-defined function $\sigma^n:X\to \RR_+$. In particular, $\sigma^1= \underline{d}$.

 %The following lemma states   key properties of~$\sigma$ and $(\sigma^n)_n$.

  \begin{lemma}\label{sigsig}  (i) For any distinct   $x,y\in X$ such that $x\cong y$, we have
\begin{equation}\label{almotrops41aaas} d(x,y)  = \sigma^{m}(x ) +\sigma^{m }(y ) \end{equation}
where $m=m(x,y) \geq 1 $ is the meeting index of~$x$ and~$y$;

(ii) For any   $x,y\in X$ such that $x\ncong y$,  both series $\sigma(x)$ and $\sigma (y)$ converge and \begin{equation}\label{almotrops41} d(x,y)  = d_{\infty}(x_{(\infty)},y_{(\infty)}) +\sigma(x )+ \sigma(y )  .\end{equation}
  \end{lemma}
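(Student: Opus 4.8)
The plan is to unwind the definitions of $d_k$ and $\underline{d_k}$ along the trimming sequence and to sum the telescoping contributions. The basic identity I would establish first, by induction on $n$, is that for any $x,y\in X$ and any $n\geq 1$ with $x_{(n-1)}\neq y_{(n-1)}$,
\begin{equation*}
 d(x,y) = d_{n-1}(x_{(n-1)},y_{(n-1)}) + \sigma^{n-1}(x) + \sigma^{n-1}(y).
\end{equation*}
This is trivial for $n=1$ (both $\sigma^{0}$ are empty sums, $d_{0}=d$), and the inductive step is immediate from \eqref{dbulletdef}: since the metric on $X_{n}$ is induced by $d_{n-1}^{-}$, as long as $x_{(n)}=p_{n-1}(x_{(n-1)})$ and $y_{(n)}=p_{n-1}(y_{(n-1)})$ are still distinct we have
\begin{equation*}
 d_{n}(x_{(n)},y_{(n)}) = d_{n-1}(x_{(n-1)},y_{(n-1)}) - \underline{d_{n-1}}(x_{(n-1)}) - \underline{d_{n-1}}(y_{(n-1)}),
\end{equation*}
and adding the two new terms to $\sigma^{n-1}$ gives $\sigma^{n}$. (One small point: I must check that $x_{(n-1)}\neq y_{(n-1)}$ forces $x_{(k)}\neq y_{(k)}$ for all $k\leq n-1$, which is clear because the trimming projections only identify points, never separate them; so all intermediate terms $\underline{d_k}(x_{(k)})$, $\underline{d_k}(y_{(k)})$ for $k\leq n-2$ are legitimately summed.)

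For part (i), let $m=m(x,y)$ be the meeting index, so $x_{(m-1)}\neq y_{(m-1)}$ but $x_{(m)}=y_{(m)}$, hence $d_{m}(x_{(m)},y_{(m)})=0$. Applying the boxed identity with $n=m$ gives
\begin{equation*}
 d(x,y) = d_{m-1}(x_{(m-1)},y_{(m-1)}) + \sigma^{m-1}(x) + \sigma^{m-1}(y),
\end{equation*}
and since $d_{m-1}(x_{(m-1)},y_{(m-1)}) = \underline{d_{m-1}}(x_{(m-1)}) + \underline{d_{m-1}}(y_{(m-1)})$ by the displayed trimming formula (the left side being $d_{m}=0$ plus the two subtracted terms), folding these last two terms in upgrades $\sigma^{m-1}$ to $\sigma^{m}$ on each side, which is exactly \eqref{almotrops41aaas}.

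For part (ii), if $x\ncong y$ then $x_{(k)}\neq y_{(k)}$ for all $k$, so the boxed identity holds for every $n$:
\begin{equation*}
 d(x,y) = d_{n}(x_{(n)},y_{(n)}) + \sigma^{n}(x) + \sigma^{n}(y).
\end{equation*}
All terms are nonnegative, so the partial sums $\sigma^{n}(x)$ and $\sigma^{n}(y)$ are nondecreasing and bounded above by $d(x,y)$; hence both series $\sigma(x)$, $\sigma(y)$ converge. Letting $n\to\infty$ and using that $d_{n}(x_{(n)},y_{(n)})\to d_{\infty}(x_{(\infty)},y_{(\infty)})$ by the definition of $d_\infty$ in Section~\ref{Trimming}, we obtain \eqref{almotrops41}. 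The only genuinely delicate point in the whole argument is bookkeeping in the inductive step — making sure one is entitled to apply \eqref{dbulletdef} at each stage, i.e.\ that the relevant points remain distinct up through index $n-1$ and that the formula $\underline{d_k}:X_k\to\RR_+$ refers to the metric on $X_k$ and not to the pseudometric $d_k^{-}$ on $X_{k-1}$; once the indices are tracked carefully the rest is a telescoping sum.
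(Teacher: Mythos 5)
Your proposal is correct and is essentially the paper's own argument: the inductive "boxed identity" is exactly the paper's relation \eqref{almotree} summed over $s=0,\dots,n-1$, and both part (i) (using $d_m(x_{(m)},y_{(m)})=0$ to fold the last two terms into $\sigma^m$) and part (ii) (bounding partial sums by $d(x,y)$ and passing to the limit) proceed identically. The bookkeeping point you flag — that $x_{(n-1)}\neq y_{(n-1)}$ forces distinctness at all earlier indices — is the same observation the paper makes implicitly when it asserts \eqref{almotree} for all $s<m$.
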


 \begin{proof}   To prove (i), note that for all  $s <m$, we have  $x_{(s)} \neq y_{(s)}$ and therefore
 \begin{equation}\label{almotree} d_{s}(x_{(s)},y_{(s)})- d_{s+1}(x_{(s+1)},y_{(s+1)})= \underline{d_s} (x_{(s)} ) +\underline{d_s} (y_{(s)} ) . \end{equation}
Summing up   over   $s=0, 1, ..., m-1$ and using that $d_{m}(x_{(m)},y_{(m)})=0$,  we get
$$ d(x,y)  = d(x_{(0)},y_{(0)})   =\sigma^{m}(x ) +\sigma^{m}(y ).$$
 To prove (ii), note that   $x_{(s)} \neq y_{(s)}$ for all $s \geq 0$. Thus, we have  \eqref{almotree} for all $s \geq 0$. Summing up   over $s=0,1,..., n-1$, we obtain for all $n\geq 0$,
  \begin{equation}\label{almotrops} d(x,y)=d_{0}(x_{(0)},y_{(0)}) = d_{n}(x_{(n)},y_{(n)}) +\sigma^n  (x)+ \sigma^n(y). \end{equation}
   Therefore all partial sums of the series $\sigma(x)$ and $\sigma(y)$ are bounded above by $ d(x,y)  $. Hence, both series  converge. Taking the limit $n\to \infty$ in  \eqref{almotrops}, we obtain  \eqref{almotrops41}.
 \end{proof}

  % \subsection{The case $\card(X_\infty)\geq 2 $}\label{The casecard} Consider in more detail   the  case  $\card(X_\infty)\geq 2 $.

   \begin{lemma}\label{sigsig+} If $\card(X_\infty)\geq 2 $, then the series  $\sigma(x)$ converges for all $x\in X$.   \end{lemma}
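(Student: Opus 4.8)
The plan is to reduce the statement immediately to part (ii) of Lemma~\ref{sigsig}, which already gives convergence of $\sigma(x)$ whenever $x$ is $\ncong$ to \emph{some} point of~$X$. The hypothesis $\card(X_\infty)\geq 2$ says precisely that the equivalence relation $\cong$ on~$X$ has at least two classes. So, fixing an arbitrary $x\in X$, I would pick any $y\in X$ whose class $y_{(\infty)}\in X_\infty$ is different from $x_{(\infty)}$; such a $y$ exists because $X_\infty=X/{\cong}$ has more than one element. Then $x\ncong y$, and Lemma~\ref{sigsig}(ii) applies verbatim to the pair $(x,y)$ and asserts that the series $\sigma(x)$ converges. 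Since $x\in X$ was arbitrary, this proves the lemma.

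It is perhaps worth recalling, for the reader, why Lemma~\ref{sigsig}(ii) yields convergence in the first place, since that is the only quantitative ingredient: for $x\ncong y$ one has $x_{(s)}\neq y_{(s)}$ for every $s\geq 0$, so the telescoping identity \eqref{almotree} holds for all~$s$, and summing it over $s=0,1,\dots,n-1$ gives \eqref{almotrops}, i.e. $\sigma^n(x)+\sigma^n(y)=d(x,y)-d_n(x_{(n)},y_{(n)})\leq d(x,y)$ for all~$n$. As each term $\underline{d_k}(x_{(k)})$ is nonnegative, the partial sums $\sigma^n(x)$ form a nondecreasing sequence bounded above by $d(x,y)$, hence converge. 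I would not reprove this, only cite it.

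There is essentially no obstacle: the single point to be careful about is the purely logical one that $\card(X_\infty)\geq 2$ produces, \emph{for each individual} $x$, a partner $y$ lying outside the $\cong$-class of~$x$ — and this is immediate from the definition of $X_\infty$. All the analytic content was already carried out in the proof of Lemma~\ref{sigsig}.
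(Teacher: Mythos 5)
Your proposal is correct and coincides with the paper's own argument: the paper likewise derives the lemma directly from Lemma~\ref{sigsig}(ii) by picking, for each $x$, some $y$ with $x\ncong y$, which exists since $\card(X_\infty)\geq 2$. The extra recap of why the partial sums are bounded by $d(x,y)$ is accurate but not needed beyond the citation.
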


This   directly follows from Lemma~\ref{sigsig}.(ii) as we can   pick   $y\in X $   so that $x\ncong y$. 

\subsection{Proof of Theorem~\ref{thmain}}\label{proof2} Suppose first that $\card(X_\infty)=1$. Then~$C$ is (path) connected and~$\rho$ is  the path pseudometric in~$C$. We verify   that   $\rho (x_{(k)}, y_{(k)})=d_k (x_{(k)}, y_{(k)})$ for all $x,y\in X$ and all $k\geq 0$. The connectedness of~$C$ implies that   $x \cong y$. Let $m=m(x,y)$ be the meeting index of~$x$ and~$y$. If $k\geq m$, then $x_{(k)}= y_{(k)}$ and $\rho (x_{(k)}, y_{(k)})=d_k (x_{(k)}, y_{(k)})=0$. If $k <m$, then an injective path from $x_{(k)}$ to $y_{(k)}$ in~$C$ is formed by the   edges  $$ e_{x_{(k)} }, e_{x_{(k+1)} },...,  e_{x_{(m-1)} }, e_{y_{(m-1)}},  ..., e_{y_{(k+1)}}, e_{y_{(k)}}.$$
Therefore
\begin{equation}\label{almotr--} \rho(x_{(k)},y_{(k)})     =\sum_{s=k}^{m-1} (\underline{d_s} (x_{(s)} )+ \underline{d_s} (y_{(s)} )) . \end{equation}
 By  the definition  of the metric $d_{s+1}$, for all $s < m$, we have \eqref{almotree}.
Summing up these equalities over $s=k, ..., m-1$, we  obtain that
$$ d_{k}(x_{(k)},y_{(k)})=d_{k}(x_{(k)},y_{(k)})- d_{m}(x_{(m)},y_{(m)})= \sum_{s=k}^{m-1} (\underline{d_s} (x_{(s)} )+ \underline{d_s} (y_{(s)} )) .  $$
 Comparing with \eqref{almotr--}, we obtain that
$ \rho(x_{(k)},y_{(k)})  = d_{k}(x_{(k)},y_{(k)})$. %Thus, $\rho \vert_{X_k}=d_k$ for all~$k$.

   Suppose now that  $\card(X_\infty)\geq 2$. By Lemma~\ref{sigsig+},  the series  \eqref{almotropdfdfdfds---} converges for all $x\in X$ and yields a function $\sigma: X\to \RR_+$.
   This function  extends to~$C  $ as follows: for any $a\in C$ lying in the edge  $e=e_{x_{(k)}}$  connecting the vertices $x_{(k)}$ and $x_{(k+1)}$ with $x\in X, k\geq 0$, set
   $$ \sigma(a)= d_e(a, x_{(k+1)})+\sum_{s=k+1}^{\infty} \,  \underline{d_s} (x_{(s)} ) $$
   where $d_e$ is the  pseudometric in~$e$ induced by the   fixed homeomorphism $e\approx [0,  {\underline {d_k} } (x_{(k)})]$ if ${\underline {d_k} } (x_{(k)})>0$ and $d_e=0$ if ${\underline {d_k} } (x_{(k)})=0$.
   The  infinite series  in the expression for $\sigma(a)$ is majorated by   $\sigma (x)$ and  therefore converges.
   In particular, $\sigma(x_{(k)})= \sum_{s\geq k}  \,  \underline{d_s} (x_{(s)} ) $ for all $x\in X$, $k \geq 0$. It is clear that the  function   $\sigma \colon C\to \RR_+$ is continuous. The next lemma implies that~$\sigma$ is monotonous in the sense that if $b\in C$ lies below $a\in C$, then $\sigma(b)\leq \sigma(a)$.
%Indeed, if~$L $ is the component of~$C$ containing $a,b$, then

  \begin{lemma}\label{sigpointssig}   If   $a,b\in C$ lie in the same component~$L$ of~$C$, then
   \begin{equation}\label{ineqs} \vert \sigma (a) -\sigma (b) \vert \leq d_L(a,b) \leq \sigma (a) +\sigma (b)\end{equation}
   where $d_L$ is the  path pseudometric in~$L$. Moreover, if~$b $ lies below~$a $, then
    \begin{equation}\label{ineqs+} \sigma (a) =d_L(a,b)+\sigma (b). \end{equation}
  \end{lemma}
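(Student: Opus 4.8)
The plan is to exploit the tree structure of the component $L$: between any two points $a,b$ of $L$ there is a unique injective path, and the key is to understand how this path interacts with the "downward" direction encoded in the partial order. First I would reduce to the case where $a$ and $b$ are comparable, i.e.\ one lies below the other. Indeed, in a tree the injective path from $a$ to $b$ passes through a unique point $c$ which is the "highest" point on that path; more precisely, I claim $c$ is the unique point on the path from which both $a$ and $b$ lie below (or equal $c$). To see such a $c$ exists, note that each edge $e_{x_{(k)}}$ is oriented from $X_k$ toward $X_{k+1}$, and the injective path from $a$ to $b$ first traverses some edges "upward" (against their orientation) and then "downward"; the turning point is $c$. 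Once we have $c$ with $a,b$ below $c$ (or equal to $c$) and $d_L(a,b)=d_L(a,c)+d_L(c,b)$, it suffices to prove \eqref{ineqs+} for the comparable pairs $(c,a)$ and $(c,b)$ and then combine: $d_L(a,b)=d_L(c,a)+d_L(c,b)=(\sigma(c)-\sigma(a))+(\sigma(c)-\sigma(b))$, which immediately gives the upper bound $d_L(a,b)\le \sigma(a)+\sigma(b)$ (using $\sigma\ge 0$), while the lower bound $|\sigma(a)-\sigma(b)|\le d_L(a,b)$ follows because $|\sigma(a)-\sigma(b)|=|d_L(c,b)-d_L(c,a)|\le d_L(a,b)$ by the triangle inequality.

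So the heart of the matter is \eqref{ineqs+} when $b$ lies below $a$. Here I would argue by first treating the case where $a$ and $b$ lie on a single "vertical ray", i.e.\ $a$ is on the edge $e_{x_{(j)}}$ and $b$ is on the edge $e_{x_{(k)}}$ for the same $x\in X$ with $j\le k$. Using the definition $\sigma(a)=d_e(a,x_{(j+1)})+\sum_{s\ge j+1}\underline{d_s}(x_{(s)})$ and the analogous formula for $b$, the difference $\sigma(a)-\sigma(b)$ telescopes: the terms $\underline{d_s}(x_{(s)})$ for $j+1\le s\le k$ together with the two partial-edge contributions $d_e(a,x_{(j+1)})$ and $d_{e'}(x_{(k)},b)$ (taking $e'=e_{x_{(k)}}$) add up exactly to the length of the descending path from $a$ to $b$, which is $d_L(a,b)$. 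This is a direct computation from the definitions of $\sigma$ on edges and of the path pseudometric. For a general $b$ below $a$, every descending path is in fact of this form once one observes that the downward direction never branches in the sense relevant here: from any point, going down means following the edges $e_{x_{(s)}}$ for increasing $s$ attached to a fixed $x$ — that is, if $b$ lies below $a$ then $a$ and $b$ lie on a common vertical ray as above, because the only edge leaving a vertex $x_{(s)}$ in the downward direction is $e_{x_{(s)}}$ itself. Hence the general case reduces to the ray case.

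The step I expect to be the main obstacle is making the reduction to comparable pairs fully rigorous — namely, establishing the existence of the turning point $c$ on the injective $a$--$b$ path and verifying that both $a$ and $b$ lie below $c$. This requires a careful description of injective paths in $C$: one must check that such a path, written as a concatenation of (sub)edges, changes orientation at most once relative to the ambient stratification $X_0\to X_1\to\cdots$. The point is that if a path went "down then up then down" it would have to revisit a vertex, contradicting injectivity in a tree; the nontrivial bookkeeping is to phrase this cleanly. A secondary, purely technical, point is the handling of zero-length edges, where $d_e=0$ and the corresponding contributions simply vanish — this does not affect any of the identities but should be mentioned so that the telescoping argument is valid verbatim in the pseudometric setting. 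Once these structural facts about $C$ are in place, the inequalities \eqref{ineqs} and the identity \eqref{ineqs+} follow by the telescoping computation and the triangle inequality as sketched.
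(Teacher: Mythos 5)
Your overall strategy --- establish \eqref{ineqs+} by a telescoping computation along a vertical ray, then reduce \eqref{ineqs} to comparable pairs via the turning point of the injective path --- is essentially the paper's. However, you have the geometry of the trimming cylinder upside down, and this breaks your derivation of the right-hand inequality in \eqref{ineqs}. In $C$ each vertex $v\in X_k$ has exactly \emph{one} edge going down (to $p_k(v)\in X_{k+1}$) and possibly many edges going up (to the points of $p_{k-1}^{-1}(v)\subset X_{k-1}$). Consequently an injective path can never switch from going up to going down: if it arrives at $u\in X_{k-1}$ by going up, the only downward edge at $u$ leads straight back to $p_{k-1}(u)$, the vertex it just left. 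It \emph{can} switch from down to up. So the injective path from $a$ to $b$ is V-shaped with its turning point $v$ lying \emph{below} both $a$ and $b$ (it is the first common point of the two descending rays), not above them as you assert; your claim that a ``down then up then down'' path must revisit a vertex is the wrong implication for this tree.

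This is not merely a labelling issue. With your orientation, applying \eqref{ineqs+} to the pairs $(c,a)$ and $(c,b)$ gives $d_L(a,b)=2\sigma(c)-\sigma(a)-\sigma(b)$, and positivity of $\sigma$ does not yield $d_L(a,b)\le\sigma(a)+\sigma(b)$: you would need $\sigma(c)\le\sigma(a)+\sigma(b)$, which has no reason to hold for a turning point far above $a$ and $b$. With the turning point $v$ at the bottom of the V one gets instead $d_L(a,b)=\bigl(\sigma(a)-\sigma(v)\bigr)+\bigl(\sigma(b)-\sigma(v)\bigr)\le\sigma(a)+\sigma(b)$, which is exactly the paper's argument; the left inequality then follows from $\vert\sigma(a)-\sigma(b)\vert=\vert d_L(a,v)-d_L(b,v)\vert\le d_L(a,b)$. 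The remaining ingredients of your plan --- the telescoping identity for \eqref{ineqs+}, the observation that ``$b$ below $a$'' forces $a$ and $b$ onto a common vertical ray because the downward edge at each vertex is unique, and the handling of zero-length edges --- are correct and match the paper once the turning point is placed correctly.
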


 \begin{proof} Formula \eqref{ineqs+} follows from the definitions.
 For $a=b$ the formula \eqref{ineqs} is obvious.  If one of the points $a,b$ lies below the other one,   then \eqref{ineqs} follows from \eqref{ineqs+}.   In all other cases, the injective path from~$a$ to~$b$  is V-shaped, i.e.,  goes down  from~$a$ to a certain vertex~$v$ of~$L$ and then goes up  to~$b$  (a  path   goes up  if the inverse path  goes down). Then $$\sigma(a)= d_L(a, v)+\sigma(v),\,\,\, \sigma(b)= d_L(b, v)+\sigma(v) , $$
   and $d_L(a,b)=d_L(a, v)+ d_L(b, v)$.
     This  easily implies \eqref{ineqs}. \end{proof}

      To complete the proof of the theorem, consider  the map $q: C\to  \pi_0(C)= X_\infty$ carrying each point    to its path connected component. For any $a,b \in C$, set
    \begin{equation*}
    \rho (a,b)=\left\{
                \begin{array}{ll}
                d_L(a,b) \quad {\text {if}} \,\,  a,b \,\,  {\text {lie in the same component}} \, \, L \,\, {\rm {of}}\,\, C , \\
                  d_\infty( q(a), q(b)  )+\sigma(a)+ \sigma(b) \quad {\text {if}} \,\,  q(a) \neq q(b).
                \end{array}
              \right.
\end{equation*}
We claim that the  map $\rho: C \times C \to \RR_+$ is a pseudometric. That   $\rho(a,a)=0$ and $\rho(a,b)=\rho(b,a)$ for all $a,b\in C$ is clear.  For points of~$C$ lying in the same component~$L$, the triangle inequality  follows from the one for $d_L$.
For points of~$C$ lying in three different components, the triangle inequality   follows from the  one for $d_\infty$. If  $a,b \in L $ for a component $L\subset C $ and $c\in C \setminus L$, then  by Lemma~\ref{sigpointssig},  $$\vert \rho(a,c)-\rho(b,c) \vert
= \vert \sigma (a) -\sigma (b) \vert \leq d_L(a,b) =\rho(a,b) $$
and $$  \rho(a,b)=d_L(a,b) \leq \sigma (a) +\sigma (b) \leq \rho(a,c)+\rho(b,c).$$

We check  that $\rho\vert_{X_k}=d_k$ for all~$k$. Pick any $x,y \in X$. If $x\cong y$, then the  arguments given in the case of connected $C$ apply and show that  $  \rho(x_{(k)},y_{(k)}) =d_{k}(x_{(k)},y_{(k)}) $. If $x\ncong y$, then similar arguments show that
for all $n\geq k$,
$$d_{k}(x_{(k)},y_{(k)})= d_{n}(x_{(n)},y_{(n)})+ \sum_{s=k}^{n-1} (\underline{d_s} (x_{(s)} )+ \underline{d_s} (y_{(s)} )). $$
Taking the limit   $n \to \infty$, we get
$$d_{k}(x_{(k)},y_{(k)}) =  d_\infty(  x_{(\infty)} ,  y_{(\infty)}   )+\sigma(x_{(k)})+ \sigma(y_{(k)}) =\rho(x_{(k)},y_{(k)}). $$

Finally, we prove that $\rho(a,b) \leq \rho'(a,b)$ for any $a,b \in C$ and any pseudometric~$\rho'$ in~$C$ which restricts to~$d$ in~$X$ and to the path pseudometrics in the   components  of~$C$.  It suffices to treat the case where $a,b$ lie in different components, say, $L, M$   of~$C$. Pick a path  in~$L$ starting in~$a $, going up,  and ending in  some   $x\in X$. Then $\sigma(x) =d_L(x,a) +\sigma(a) $. Similarly, pick a path  in~$M$ starting in~$b $, going up,  and ending in  some   $y\in X$. Then  $\sigma(y) =d_M(y,b) +\sigma(b)$.
We have $q(x)=q(a) \neq q(y)=q(b)$ and therefore  $$d(x,y)=\rho(x,y)= d_\infty( q(x), q(y)  )+\sigma(x)+ \sigma(y)$$
$$=
d_\infty( q(a), q(b)  )+d_L(x,a) +\sigma(a)+  d_M(y,b) +\sigma(b)$$
$$= \rho(a,b)+d_L(x,a) +  d_M(y,b).  $$
At the same time, the assumptions on $\rho'$ imply that
$$d(x,y)=\rho'(x,y) \leq \rho'(x,a) + \rho'(a,b) +\rho'(y,b)= \rho'(a,b) +d_L(x,a) +  d_M(y,b).$$
We conclude that
$$\rho(a,b)= d(x,y)- d_L(x,a) -  d_M(y,b) \leq \rho'(a,b) .$$

\subsection{Remark} If $\card(X_\infty)=1$, then the series $\sigma(x)$ may  converge or not. For instance, if in Examples~\ref{A  construction of rooted forests} and~\ref{A ++ construction of rooted forests} we set $ \delta_k(x)=1$ for all $x\in X,k\geq 0$, then $\sigma^n(x)=n$ for all~$n$ and  $\sigma(x)$ does not converge. Setting $\delta_k (x)=2^{-k}$ for all $x,k$, we obtain   examples where $\sigma (x) $ converges for all $x$. In general, if  $\sigma(x)$ converges for some $x\in X$, then it converges for  all $x\in X$. Indeed, the assumption $\card(X_\infty)=1$ ensures that any   $x,y \in X$ project to the same element of $X_k$ for all sufficiently big~$k$ and therefore the series $\sigma(x), \sigma(y)$ differ only in a finite number of terms.  Also, if (the only component of) $C$ is special, then   all terms of the series $\sigma(x)$  starting from a certain place are equal to zero and   $\sigma(x)$ converges.

 %The  resulting metric trees are again numerated by elements of $X_\infty$.

 \section{The tight span versus the trimming cylinder}

  We recall the   tight span    following \cite{Dr},  \cite{DMT} and    relate it to the trimming cylinder. We also  discuss the tight spans of pseudometric spaces.

 \subsection{Tight span   of a metric space}\label{Basics} The \emph{tight span} of  a  metric space $(X,d)$ is the metric space $(T(X), d_T)$   consisting of all functions $f \colon X\to \RR_+$
such that
\begin{equation}\label{titi} f(x)=\sup_{y\in X}(d(x,y)-f(y)) \,\,\, {\text {for all}} \, \,\, x\in X.\end{equation}
This identity  may be restated by saying  that

$(\ast)$ $ f(x)+  f(y) \geq d(x,y)$ for all  $x,y \in X$ and

$(\ast \ast)$
 for any $x\in X$ and any real number $\varepsilon >0$, there is $ y \in X $ such that
 $    f(x)+   f(y) \leq d(x,y) +\varepsilon$.

 The   metric  $d_T$ in $T(X)$ is defined by    \begin{equation}\label{metric} d_T(f,g)=\sup_{x\in X}\vert f(x)-g(x)\vert \,\, {\text {for any}} \,\,f,g\in T(X) \end{equation}
 (here the set $\{\vert f(x)-g(x)\vert\}_{x\in X}$ is bounded above  and    has a well-defined supremum). The map  carrying any $x\in X$ to the function $X\to \RR_+,  y\mapsto d(x,y)$   is   a metric  embedding $X\hookrightarrow T(X)$.

%We   will view~$X$ as a metric subspace of $T(X)$ via this embedding.

Each      $f  \in T(X)$ is  minimal in the set of functions $f':X\to \RR_+$ satisfying $( \ast)$:  if  $f\geq f' $ (in the sense that $f(x)\geq f'(x)$ for all $x\in X$), then $f=f'$. Indeed,
 $$f(x) \geq f'(x) \geq \sup_{y\in X}(d(x,y)-f'(y)) \geq \sup_{y\in X}(d(x,y)-f(y))=f(x)$$
for all $x\in X$, and so  $f(x)=f'(x)$.

%Here  we write $f \geq g$ whenever  $f(x) \geq g (x)$  for all $x\in X$.

If $X$ is a singleton, then    $T(X)=\{0\}$. If $\card (X)\geq 2$, then   \eqref{titi} may be reformulated as follows (see \cite{Dr}, Sect.\ 1.4): for every $x\in X$,
\begin{equation}\label{titi+} f(x)=\sup_{y\in X\setminus \{x\}}(d(x,y)-f(y)).\end{equation}
This  identity     may be   restated by saying that

 $(\ast)'$  $ f(x)+  f(y) \geq d(x,y)$ for all distinct $x,y \in X$ and

$(\ast \ast)'$
 for any $x\in X$ and any real number $\varepsilon >0$, there is $ y \in X\setminus \{x\}$ such that
 $    f(x)+   f(y) \leq d(x,y) +\varepsilon$.

  We now     relate   the tight span $T(X)$ to the trimming cylinder $ C (X)$.

% \subsection{The canonical mapping}

\begin{theor}\label{cylli}  For any   metric space $X=(X,d)$, there is a canonical distance preserving  map   $ C(X)\to T(X)$ extending the standard  embedding $X\hookrightarrow T(X)$.
  \end{theor}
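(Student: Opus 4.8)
The plan is to construct the map $C(X)\to T(X)$ explicitly by sending each point $a\in C$ to a suitable function on $X$, then to verify that this function lies in $T(X)$ and that the assignment is distance preserving. The natural candidate, suggested by Lemma~\ref{sigsig} and Lemma~\ref{sigpointssig}, is the function $\Phi_a\colon X\to\RR_+$ defined by $\Phi_a(x)=\rho(a,x)$ for $x\in X$, where $\rho$ is the pseudometric on $C$ furnished by Theorem~\ref{thmain}. For $a=x_{(0)}\in X$ this gives the function $y\mapsto d(x,y)$, so $\Phi$ restricts to the standard embedding $X\hookrightarrow T(X)$ as required. Because $\rho$ is a pseudometric restricting to $d$ on $X$, property $(\ast)$ is immediate: $\Phi_a(x)+\Phi_a(y)=\rho(a,x)+\rho(a,y)\geq\rho(x,y)=d(x,y)$ for all $x,y\in X$.

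The substantive point is property $(\ast\ast)$: given $a\in C$ and $\varepsilon>0$, we must produce $y\in X$ with $\rho(a,x)+\rho(a,y)\leq d(x,y)+\varepsilon$ for a prescribed $x$. Here I would use the monotonicity of $\sigma$ and the explicit formula for $\rho$ from Section~\ref{proof2}. Suppose $a$ lies in the edge $e_{x'_{(k)}}$ of a component $L$. The key observation is that any leaf $y\in X\cap L$ lying below $a$ satisfies $\rho(a,y)=d_L(a,y)=\sigma(a)-\sigma(y)$ by \eqref{ineqs+}, and one should choose the point $x$ in $(\ast\ast)$ so that the geodesic from $x$ to $y$ in $C$ passes through (or near) $a$; concretely, for the function $\Phi_a$ one verifies $(\ast\ast)$ at a given $x\in X$ by taking $y$ to be a leaf below the first vertex on the downward continuation from $a$ that is ``away from $x$'' — using $\card(p_k^{-1}(\cdot))\geq 3$ (implicit in the structure of $C$) to guarantee that such leaves exist at every level and at arbitrarily small $\sigma$-value. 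The same bound $\sup_s\underline{d_s}(x'_{(s)})\to 0$ along the descending sequence, or finiteness of $\sigma(a)$ combined with the fact that tails of the series $\sigma$ go to zero, makes the $\varepsilon$ absorbable. When $q(a)\neq q(x)$ one instead uses the formula $\rho(a,x)=d_\infty(q(a),q(x))+\sigma(a)+\sigma(x)$ together with the $(\ast\ast)'$-property already known for the metric $d_\infty$ on $X_\infty$ and for the embeddings of the leaf sets of the individual trees.

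Once $\Phi_a\in T(X)$ is established, it remains to check that $\Phi$ is distance preserving, i.e. $d_T(\Phi_a,\Phi_b)=\sup_{x\in X}|\rho(a,x)-\rho(b,x)|=\rho(a,b)$ for all $a,b\in C$. The inequality $\leq$ is the triangle inequality $|\rho(a,x)-\rho(b,x)|\leq\rho(a,b)$. For $\geq$ I would again split into cases: if $a,b$ lie in the same component $L$, pick a leaf $x\in X\cap L$ below the ``far end'' of the geodesic $[a,b]$ — precisely, prolong the geodesic past $b$ downward to a leaf $x$, so that $d_L(a,x)=d_L(a,b)+d_L(b,x)$, whence $|\rho(a,x)-\rho(b,x)|=d_L(a,b)=\rho(a,b)$; if $q(a)\neq q(b)$, take $x=a$ itself extended downward to a leaf in the component of $a$, so that $\rho(b,x)=d_\infty(q(a),q(b))+\sigma(x)+\sigma(b)$ while $\rho(a,x)=\sigma(a)-\sigma(x)$, and the difference telescopes to $\rho(a,b)$. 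Finally one notes the map is canonical since it was built purely from $\rho$, which was characterized canonically in Theorem~\ref{thmain}. The main obstacle I anticipate is the careful bookkeeping in $(\ast\ast)$ — exhibiting, for every $a\in C$ and every $x\in X$, an explicit leaf $y$ realizing the supremum in \eqref{titi+} up to $\varepsilon$ — since this requires combining the downward-path structure of $C$, the branching condition, and the convergence/tail behaviour of the series $\sigma$ into a single choice.
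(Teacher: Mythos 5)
Your candidate map is exactly the paper's: $a\mapsto f_a=\rho(a,-)$, and your verification of $(\ast)$ and your two-sided strategy for proving $d_T(f_a,f_b)=\rho(a,b)$ match the paper's proof in outline. But the step you yourself flag as the main obstacle --- verifying $(\ast\ast)$ --- is where the real content lies, and your sketch of it contains genuine errors rather than mere omitted bookkeeping. First, you invoke ``$\card(p_k^{-1}(\cdot))\geq 3$ (implicit in the structure of $C$)'' to produce suitable leaves at every level. That condition is a \emph{hypothesis} of Example~\ref{A  construction of rooted forests}, not a property of a general trimming cylinder: the fibres of the trimming projections can perfectly well be singletons, so no branching is available. (Fortunately none is needed: to find $y\in X$ with $y_{(k)}=v$ one only needs surjectivity of $X_0\to X_k$.) Second, your picture of $C$ is upside down. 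In the paper's conventions $X=X_0$ is the set of leaves and ``down'' means increasing $k$, i.e.\ moving \emph{away} from $X$; so there are no ``leaves below $a$'', one cannot ``prolong the geodesic past $b$ downward to a leaf'', and for a leaf $y$ in the component of $a$ one has $\rho(a,y)=\sigma(y)-\sigma(a)$, not $\sigma(a)-\sigma(y)$. This inversion breaks your recipe for the lower bound on $d_T(f_a,f_b)$ precisely in the case where $b$ lies below $a$ (continuing past $b$ never reaches a leaf); the paper handles that case separately by going \emph{up} from $a$ to a leaf $x$, so that $a$ lies between $b$ and $x$.

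Third, the $\varepsilon$-absorption via ``tails of the series $\sigma$ go to zero'' is not an argument and is also unnecessary: the paper proves the exact statement that for every $x\in X$ there is $y\in X$ with $f_a(x)+f_a(y)=d(x,y)$, by taking $y$ with $y_{(k)}=v$ when $a$ lies on the edge $e_v$ with $v\in X_k$, and checking that $a$ then lies on the geodesic from $x$ to $y$. The cases you do not address are exactly the delicate ones: when $x_{(k)}=v$ (so $a$ sits directly below $x$ and the choice $y_{(k)}=v$ yields nothing), one must instead pick $y$ with $y_{(k)}\neq x_{(k)}$ or $y\ncong x$; and when $\card(X_k)=1$ the relevant edge has zero length and one must induct upward through the levels until some $\card(X_l)\geq 2$ (or conclude $\card(X)=1$). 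Without these cases, and with the orientation and the unjustified branching assumption corrected, the proposal is a reasonable plan but not yet a proof.
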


 \begin{proof}    For  $a\in C=C(X) $,     define a  function
 $f=f_{a}:X \to \RR_+$  by $f(x)= \rho(x,a) $ for all $x\in X \subset C$ where $\rho=\rho_X$ is the pseudometric in~$C$.   We claim that $f\in T(X)$. Note that  for any  $x,y\in X$, the triangle inequality for~$\rho$ implies that $f(x)+f(y) \geq \rho(x,y)=d(x,y)$. Thus, $f$ satisfies Condition  $( \ast)$  above. Instead of Condition $( \ast\ast)$, we check a stronger claim:
for   any $x\in X$, there is $y\in X$ such that $f(x)+f(y) = d(x,y)$. In other words, we  find $y\in X$ such that~$a$ lies between~$x$ and~$y$ in~$C$ (see Section~\ref{Preliminaries} for  \lq\lq betweenness").
 Assume for concreteness that $a$ lies  in  an edge  of~$C$  connecting   vertices $v\in X_k$ and $p_k(v) \in X_{k+1}$  for some $k\geq 0$ (possibly, $a=v$). We   separate several cases.

 (i) Let  $x,v$ lie in different components of~$C$. Pick    $y\in X$ such that $y_{(k)}=v$.
 The definition of~$\rho$   shows that
 $$f(x)+   f(y) =\rho(x,a)+\rho(y,a)=\rho(x,a)+ \sigma(y)-\sigma(a)=\rho(x,y)=d(x,y)  .$$

 (ii) Let $x,v$ lie in the same component of~$C$ and %$\card(X_k)\geq 2$. If
 $x_{(k)} \neq v$. Pick   $y\in X$ such that $y_{(k)}=v$. Then $x\cong y$. Consider the injective path in~$C$ going from $y$ down to $y_{(k)}=v$, then further down to $y_{(m)}=x_{(m)}$ where $m=m(x,y) >k$ is the meeting index of~$x, y$, and then up to $x$. This path from~$y$ to~$x$  has length  $\rho(x,y)=d(x,y)$. Since the point~$a$ lies on this path, its length $d(x,y)  $  is   equal to $\rho(x,a)+\rho(y,a)=f(x)+   f(y)$.

 (iii) Let $x_{(k)}=v$ and $\card(X_k) \geq 2$. Pick   $y\in X$ such that $x\ncong y$. The same argument as in (i) shows that $    f(x)+   f(y) =d(x,y)  $.

 (iv) Let $\card(X_k)=1$, i.e., $X_k=\{v\}$. Then the   edge containing~$a$ (i.e., the edge from $v$ to   $p_k(v)$) has zero length, and   $f=\rho(-, a)=\rho(-,v)$. Thus, it is enough to treat the case $a=v$.   If $\card(X_{k-1})\geq 2$, then there is $y\in X$ such that $x_{(k-1)} \neq y_{(k-1)}$. The injective path from $y$ to~$x$ in~$C$ must pass  through the only element~$v$ of $X_k$ and so  $    f(x)+   f(y) =d(x,y)  $.  If $\card(X_{k-1})=1$, then   $f= \rho(-,u)$ where~$u$ is the only element of $X_{k-1}$. Proceeding by induction, we eventually find some $l<k$ such that $\card(X_l) \geq 2$ and the argument above works or deduce that $\card(X)=1$ in which case   the theorem is obvious.

 We next verify   that $d_T(f_a, f_{b})= \rho(a,b)$ for all $a,b\in C$. The case $a=b$ is obvious as  both sides are equal to zero. Assume that $a\neq b$ and recall that $$d_T(f_a, f_{b})=\sup_{x\in X}\vert f_{a}(x)-f_{b}(x)\vert=\sup_{x\in X}\vert \rho(x,a)-\rho(x,b)\vert.$$ The triangle inequality for~$\rho$ yields $d_T(f_a, f_{b}) \leq \rho(a,b)$. To prove that this  is an equality, we   find a point $x\in X\subset C$ such that either $b$ lies between~$a$ and~$x$ (with respect to the pseudometric~$\rho$ in~$C$) or $a$ lies between~$b$ and~$x$. In  both cases $ \vert \rho(x,a)-\rho(x,b) \vert=\rho(a,b)  $. Note that going up from $a,b$ in~$C$ we eventually hit certain elements, respectively, $x,y \in X\subset C$.  If $a,b$ belong to   different components of~$C$, then $a$ lies between~$b$ and~$x$ as  follows from the definition of~$\rho$.
 Assume   that $a,b$ belong to the same component~$L$ of~$C$. If~$b$ lies below~$a$, then~$a$ lies between~$b$ and~$x$ as follows  from the definition of the path pseudometric   in~$L$.
In all other cases,  an injective path from~$y$ to~$a$ in $L$ necessarily passes by $b$ and therefore~$b$ lies between~$a$ and~$y$.  \end{proof}

%Since $T(X)$ is a metric space,   the    map  provided by   Theorem~\ref{cylli} induces   a metric embedding ${\overline {C (X)}} \hookrightarrow T(X)$ extending the standard  embedding $X\hookrightarrow T(X)$. We state this claim as a corollary.

\begin{corol}\label{cyllicorol}  For any   metric space~$X$, there is a canonical metric embedding  $ {\overline {C (X)}} \hookrightarrow T(X)$  extending the standard  embedding $X\hookrightarrow T(X)$.
  \end{corol}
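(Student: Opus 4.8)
The plan is to derive Corollary~\ref{cyllicorol} directly from Theorem~\ref{cylli} together with the universal property of the metric quotient recalled in Section~\ref{PreliminariesBIS01}. By Theorem~\ref{cylli}, there is a canonical distance preserving map $j\colon C(X)\to T(X)$ extending the standard embedding $X\hookrightarrow T(X)$. Since $T(X)$ is a metric space, the last sentence of Section~\ref{PreliminariesBIS01} applies: any distance preserving map from the pseudometric space $C(X)$ to a metric space factors uniquely as the composition of the projection $C(X)\to \overline{C(X)}$ with a metric embedding $\overline{C(X)}\hookrightarrow T(X)$. This yields the desired canonical metric embedding $\overline{C(X)}\hookrightarrow T(X)$.

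It remains only to check that this induced embedding extends the standard embedding $X\hookrightarrow T(X)$ in the sense intended, i.e.\ that it is compatible with the inclusion $X\hookrightarrow \overline{C(X)}$ described in Section~\ref{overlC}. First I would recall that the embedding $X\hookrightarrow C(X)$ identifies $X$ with the set of leaves $X_0\subset C(X)$, and that the projection $C(X)\to\overline{C(X)}$ restricts on $X_0$ to the metric embedding $X\hookrightarrow\overline{C(X)}$ (this uses that $\rho$ restricts to the genuine metric $d$ on $X$, so no two points of $X$ are glued). Then the triangle
$$X\hookrightarrow C(X)\to \overline{C(X)}\hookrightarrow T(X)$$
commutes because the outer composite $X\hookrightarrow C(X)\to T(X)$ is the standard embedding by Theorem~\ref{cylli} and the factorization through $\overline{C(X)}$ is the one provided by the universal property; hence the map $\overline{C(X)}\hookrightarrow T(X)$ restricted to $X$ is the standard embedding $X\hookrightarrow T(X)$.

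The argument is short and essentially formal; there is no real obstacle beyond bookkeeping. The one point that deserves a sentence of care is the observation that $X$ genuinely embeds (rather than merely maps) into $\overline{C(X)}$, which is exactly the fact used in Section~\ref{overlC} and which follows from $\rho|_{X\times X}=d$ being a metric. Everything else is an immediate consequence of Theorem~\ref{cylli} and the standard factorization of distance preserving maps through the metric quotient.
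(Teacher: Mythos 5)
Your proposal is correct and is exactly the paper's argument: the author derives the corollary in one sentence by saying the embedding is induced by the distance preserving map of Theorem~\ref{cylli}, which implicitly invokes the same universal property of the metric quotient from Section~\ref{PreliminariesBIS01} that you spell out. Your additional check that the projection $C(X)\to\overline{C(X)}$ restricts to the embedding $X\hookrightarrow\overline{C(X)}$ (since $\rho$ restricts to the metric $d$ on $X$) is the same observation the paper records in Section~\ref{overlC}.
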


The  embedding  $\overline C= {\overline {C (X)}} \hookrightarrow T(X)$  is induced by the distance preserving map   from   Theorem~\ref{cylli}.  We  will identify ${\overline {C  }}$ with its image in $ T(X)$ under this embedding, i.e., view ${\overline {C  }} $ as a metric subspace of $ T(X)$.

\subsection{The tight span of a pseudometric space}
The definition of the tight span via \eqref{titi}   extends word for word to  pseudometric spaces. This however does not give new metric spaces because, by the next lemma, a  pseudometric space and   its metric quotient have the same   tight span.

 \begin{lemma}\label{psedotight11} Let $X=(X,d)$ be  a pseudometric space and let $q:X \to \overline X = X/{\sim_d }$ be the  projection from~$X$ to its metric quotient. Then the formula ${h}\mapsto {h}  q$, where~${h}$ runs over $ T(\overline X)$, defines an isometry   $T(\overline X) \to T(X)$.
  \end{lemma}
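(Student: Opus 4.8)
The plan is to show directly that $h \mapsto hq$ is a well-defined bijection $T(\overline X) \to T(X)$ that preserves the sup-metric. First I would check that if $h \in T(\overline X)$ then $hq \in T(X)$. Since $q$ is surjective and distance preserving in the sense that $\overline d(q(x), q(y)) = d(x,y)$, condition $(\ast)$ for $hq$ reads $h(q(x)) + h(q(y)) \geq d(x,y) = \overline d(q(x), q(y))$, which is exactly $(\ast)$ for $h$; and the supremum defining $(hq)(x) = \sup_{y \in X}(d(x,y) - h(q(y)))$ equals $\sup_{\bar y \in \overline X}(\overline d(q(x), \bar y) - h(\bar y)) = h(q(x))$ because $q$ is onto. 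So $hq$ satisfies \eqref{titi} and lies in $T(X)$. The assignment clearly preserves the metric: $\sup_{x \in X}|h(q(x)) - h'(q(x))| = \sup_{\bar x \in \overline X}|h(\bar x) - h'(\bar x)|$, again by surjectivity of $q$. In particular the map is injective.

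Next I would prove surjectivity, which is the only point requiring a small argument. Given $f \in T(X)$, I claim $f$ is constant on each fiber of $q$, i.e.\ $f(x) = f(x')$ whenever $d(x,x') = 0$. Indeed, by $(\ast)$ applied to an arbitrary $y$ together with the triangle inequality $|d(x,y) - d(x',y)| \leq d(x,x') = 0$, we get $f(x) = \sup_{y}(d(x,y) - f(y)) = \sup_{y}(d(x',y) - f(y)) = f(x')$. Hence $f$ factors uniquely as $f = hq$ for a well-defined function $h \colon \overline X \to \RR_+$. It remains to see $h \in T(\overline X)$: for $\bar x = q(x)$,
\[
h(\bar x) = f(x) = \sup_{y \in X}(d(x,y) - f(y)) = \sup_{\bar y \in \overline X}(\overline d(\bar x, \bar y) - h(\bar y)),
\]
using surjectivity of $q$ once more, so $h$ satisfies \eqref{titi} for $\overline X$. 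Thus every $f \in T(X)$ is in the image, and the map is an isometry.

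I do not expect any serious obstacle here; the one place to be careful is the observation that $f \in T(X)$ is automatically constant on $\sim_d$-classes, which is what makes the factorization through $\overline X$ possible and is the crux of why a pseudometric space and its metric quotient have the same tight span. Everything else is a formal manipulation of suprema under the surjection $q$, and the fact that $q$ preserves distances exactly (not merely non-expansively) is what lets the two supremum computations match termwise.
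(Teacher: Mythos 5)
Your proof is correct and follows essentially the same route as the paper: show $hq$ satisfies the defining identity of the tight span, observe that surjectivity of $q$ makes the map distance-preserving, and obtain surjectivity by proving that every $f\in T(X)$ is constant on $\sim_d$-classes so that it factors through $\overline X$. The only cosmetic difference is that you verify \eqref{titi} directly by identifying the two suprema termwise (and likewise get fiber-constancy from $d(x,\cdot)=d(x',\cdot)$ when $d(x,x')=0$), whereas the paper checks conditions $(\ast)$, $(\ast\ast)$ and derives fiber-constancy from the Lipschitz-type inequality $f(y)-d(x,y)\leq f(x)$; both reduce to the same use of the triangle inequality.
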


 \begin{proof} Let $\overline d$ be the metric in $\overline X$ induced by~$d$. We pick any function ${h} \colon \overline X \to \RR_+$ in $  T(\overline X)$ and verify Conditions $(\ast)$, $(\ast\ast)$ for ${h}q\colon X \to \RR_+$.  For any $x,y \in X$, we have ${h}q(x)+ {h}q(y) \geq d(x,y)$ because  if $q(x)=q(y)$, then $d(x,y)=0$, and if $q(x)\neq q(y)$,  then
 ${h}q(x)+ {h}q(y) \geq \overline d (q(x), q(y))=d(x,y)$. To verify $(\ast\ast)$, pick any $x\in X$ and $\varepsilon>0$. Since $  {h} \in T(\overline X)$ and $q$ is onto, there is $y\in X$ such that $${h}q(x)+ {h}q(y) \leq \overline d (q(x), q(y))+\varepsilon =d(x,y)+\varepsilon .$$
 Thus,   ${h}q \in T(X)$.
 That   the map $T(\overline X) \to T(X), {h}\mapsto {h}  q$ is injective and metric preserving is clear from the definitions. To  prove   surjectivity,   we show that each function $f\in T(X)$ takes equal values on   $\sim_d$-equivalent points of~$X$. We have $f(y)-d(x,y) \leq f(x)$ for all $x, y \in X$ because (cf.\ \cite{DMT}, Section 2)
$$f(y) -d(x,y)= \sup_{z\in X}(d(y,z)-f(z))- d(x,y)$$
$$=\sup_{z\in X}(d(y,z)-f(z) - d(x,y)) \leq \sup_{z\in X}(d(y,x)+d(x,z)-f(z) - d(x,y))$$
$$=\sup_{z\in X}(d(x,z)-f(z))= f(x).$$
If $d(x,y)=0$, then we get $f(y)  \leq f(x)$. Exchanging $x,y$, we get   $f(x)  \leq f(y)$. Thus, if $d(x,y)=0$, then $f(x)=f(y)$. As a consequence, $f=hq$ for a function ${h} \colon \overline X \to \RR_+$. Conditions $(\ast)$, $(\ast\ast)$ for~$f $   imply the same conditions for~$h $.
 \end{proof}

 \section{Trimming versus the tight span}

We discuss further  relations between trimming and the tight span.
In   this  section,   $X=(X,d)$ is a   metric space.

 \subsection{The canonical embedding}  Recall the function
 $\underline d: X \to \RR_+$ induced by~$d$.

  \begin{theor}\label{tight11} There is a canonical metric embedding $T(t(X)) \hookrightarrow T(X)$ whose image consists of all   $f\in T(X)$ such that $f \geq {\underline d}$.
  \end{theor}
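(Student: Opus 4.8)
The plan is to build the embedding explicitly and identify its image. Recall that $t(X) = X/\!\!\sim_{d^-}$ is the metric quotient of $(X,d^-)$ where $d^-=d_\delta$ for $\delta=\underline d$, with trimming projection $p=p_X\colon X\to t(X)$. By Lemma~\ref{psedotight11} applied to the pseudometric $d^-$ on $X$, the tight span $T(t(X))$ is isometric to the tight span $T(X,d^-)$ of the pseudometric space $(X,d^-)$ via $h\mapsto hp$. So it suffices to produce a metric embedding $T(X,d^-)\hookrightarrow T(X,d)$ with the stated image, and then precompose with that isometry. The natural candidate is the map $\Phi\colon g\mapsto g+\underline d$, sending a function $g\colon X\to\RR_+$ satisfying $(\ast)$, $(\ast\ast)$ for $d^-$ to the function $x\mapsto g(x)+\underline d(x)$.

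First I would check $\Phi$ is well-defined, i.e. that $f:=g+\underline d$ lies in $T(X,d)$. For $(\ast)$ with respect to $d$: for distinct $x,y$ we have $f(x)+f(y)=g(x)+g(y)+\underline d(x)+\underline d(y)\geq d^-(x,y)+\underline d(x)+\underline d(y)=d(x,y)$ by the definition \eqref{dbulletdef} of $d^-$; and $f(x)+f(x)=2g(x)+2\underline d(x)\geq 0$ trivially, but I should instead use the reformulation $(\ast)'$, $(\ast\ast)'$ for $\card(X)\geq 2$ (the case $\card(X)=1$ being trivial since then $T(X)=\{0\}$ and $\underline d=0$). For $(\ast\ast)'$: given $x$ and $\varepsilon>0$, pick $y\neq x$ with $g(x)+g(y)\leq d^-(x,y)+\varepsilon$; then $f(x)+f(y)\leq d^-(x,y)+\underline d(x)+\underline d(y)+\varepsilon=d(x,y)+\varepsilon$. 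Conversely, if $f\in T(X,d)$ satisfies $f\geq\underline d$, I would set $g:=f-\underline d\geq 0$ and run the same computations backwards to see $g\in T(X,d^-)$; the only subtlety is that $(\ast\ast)'$ for $f$ gives some $y$ with $f(x)+f(y)\leq d(x,y)+\varepsilon$, hence $g(x)+g(y)\leq d^-(x,y)+\varepsilon$, so $g$ satisfies $(\ast\ast)'$ for $d^-$. Thus $\Phi$ is a bijection onto $\{f\in T(X,d):f\geq\underline d\}$. That $\Phi$ is distance preserving is immediate from \eqref{metric}, since $f-f'=(g+\underline d)-(g'+\underline d)=g-g'$ pointwise, so $d_T(f,f')=\sup_x|g(x)-g'(x)|=d_{T(X,d^-)}(g,g')$.

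The remaining point is to verify that the composite embedding $T(t(X))\hookrightarrow T(X)$ is \emph{canonical}, i.e. extends the standard embedding $X\hookrightarrow T(X)$ in the appropriate sense. A point $x\in X$ maps to its trimming image $p(x)\in t(X)$, which in $T(t(X))$ is the function $\overline y\mapsto \overline{d^-}(p(x),\overline y)$ on $t(X)$; pulling back by $p$ gives $z\mapsto d^-(x,z)$ on $X$, and applying $\Phi$ yields $z\mapsto d^-(x,z)+\underline d(z)$, which for $z\neq x$ equals $d(x,z)-\underline d(x)$, and for $z=x$ equals $\underline d(x)$. This is precisely the function $\max(d(x,\cdot)-\underline d(x),\,\underline d(x)\cdot[\cdot=x])$, i.e. the element of $T(X)$ that the standard image of $x$ gets pushed to under trimming — so I would phrase "canonical" as: the square formed by $X\xrightarrow{p}t(X)$, the two Kuratowski embeddings, and $T(t(X))\hookrightarrow T(X)$ commutes up to the evident modification, and more importantly the construction is functorial/independent of choices since every map in sight is canonical. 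The main obstacle is really just this last bookkeeping about what "canonical" and "extending the standard embedding" mean precisely for a non-injective $p$; the analytic content (well-definedness of $\Phi$ and the image characterization) is a routine unwinding of $(\ast)'$, $(\ast\ast)'$ together with \eqref{dbulletdef} and the key inequality \eqref{eqsimple} that guarantees $d^-\geq 0$.
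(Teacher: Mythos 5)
Your proposal is correct and follows essentially the same route as the paper: the paper likewise sends $g\in T(t(X))$ to $\hat g = gp+\underline d$, verifies $(\ast)'$ and $(\ast\ast)'$ via the identity $d(x,y)=d^-(x,y)+\underline d(x)+\underline d(y)$ for $x\neq y$, checks distance preservation by the same pointwise cancellation, and invokes Lemma~\ref{psedotight11} (there only at the surjectivity step) to descend $f-\underline d$ to $t(X)$. The one detail you gloss over is the case $\card(t(X))=1$ with $\card(X)\geq 2$, which the paper treats separately (showing directly that $\underline d\in T(X)$ and using minimality of elements of the tight span for the image claim) because the reformulation $(\ast\ast)'$ is only stated for metric spaces, whereas your argument applies it to the pseudometric space $(X,d^-)$ --- this is true but deserves a word of justification.
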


 \begin{proof} Let $t(X)=(X_1, d_1)$ as in Section~\ref{Trimming}.   Assume  first that $X_1$ is a singleton. Then $T(X_1)=\{0\}$  and the embedding $T(X_1) \hookrightarrow T(X)$ carries $0$ to $\underline d$. We need to prove that ${\underline d} \in T(X)$.  If $\card (X) =1$, then ${\underline d} =0 \in T(X)$.   If $\card (X) \geq 2$, then our assumption on $X_1$  implies that $d(x,y)={\underline d}(x) +{\underline d}(y)$ for all distinct $x,y \in X$. Applying \eqref{titi+}   to $f={\underline d}$, we deduce that $ {\underline d} \in T(X)$. By the minimality of any  $f\in T(X)$, if $f \geq \underline d$, then  $f=\underline d$. This gives the second claim of the theorem.

 Assume now that $\card(X_1) \geq 2$, and    let $p\colon X\to X_1$ be the trimming projection.   Given $ g\in T(X_1)$, we define a  function $\hat g\colon X\to \RR_+$   by
  $$\hat g (x)= gp(x)+ {\underline d}(x) =g(x_{(1)}) +  {\underline d}(x)$$
  for all $x\in X$.  We claim that  $\hat g \in T(X)$.
  To see it, we check Conditions $(\ast  )'$ and $(\ast \ast)'$ of Section~\ref{Basics} for $f=\hat g$.
   To check $(\ast  )'$,   we separate two cases:  $p(x)=p(y)$ and $p(x)\neq p(y)$.
 If $p(x)=p(y)$, then
$(\ast  )'$ holds because
 $$\hat g(x)+ \hat g(y) \geq {\underline d}(x) + {\underline d}(y) =d(x,y).$$
  If $p(x)\neq p(y)$, then $(\ast  )'$ holds because
$$\hat g(x)+ \hat g(y) = gp(x)+ gp(y)+{\underline d}(x) + {\underline d}(y) $$
$$\geq d_1(p(x), p(y))
+{\underline d}(x) + {\underline d}(y)= d(x,y)$$ where the inequality   follows from the inclusion  $g\in T(X_1) $ and the final equality holds by the definition of  $d_1$.
To verify   $(\ast \ast )'$,  pick any $x\in X$ and   $\varepsilon >0$. Using $(\ast \ast )'$ for $g\in T(X_1)$ and using the surjectivity of  $p \colon X \to X_1$, we obtain
an element $ y \in X$ such that $p(x)\neq p(y)$ and
 $$ d_1(p(x),p(y)) +\varepsilon \geq gp(x) +gp(y).$$
%As above, we consider three cases: $x=y$; $x\neq y$ and $p(x)=p(y)$; $p(x)\neq p(y)$. If $x=y$, then In the second case we have $\varepsilon \geq gp(x) +gp(y)$ and therefore
% $$d(x,y)+\varepsilon \geq  {\underline d}(x) + {\underline d}( y) +   gp(x) +gp(y) =\hat g(x)+ \hat g(y). $$
Then $ x \neq y$ and $(\ast \ast )'$ for $\hat g$ follows: $$d(x,y)+\varepsilon =  d_1(p(x) ,p(y)) +   {\underline d}(x) + {\underline d}( y) +\varepsilon
 $$  $$\geq   gp(x) +gp(y)+ {\underline d}(x) + {\underline d}( y) =\hat g(x)+ \hat g(y). $$
 The   map  $T(X_1)\to T(X), g\mapsto \hat g$ is a metric embedding: for any $g ,h \in T(X_1)$,
$$d_T(\hat g , \hat h)= \sup_{x\in X}\vert \hat g (x)-\hat h(x)\vert
=  \sup_{x\in X}\vert   g p(x)-  hp(x)\vert$$
$$=\sup_{z\in X_1}\vert   g (z)-  h(z)\vert
=(d_1)_T(  g ,   h) .$$
 To show that any  $f\in T(X)$ satisfying $f \geq \underline d$ lies in the image of our embedding, set $f_-=f- \underline d\colon X \to \RR_+$ and recall the pseudometric $d^-$ in~$X$ defined in Section~\ref{Trimming}. Since $\card(X) \geq \card (X_1)\geq 2$, we have by \eqref{titi+} that for any $x\in X$,
 $$f_-(x)= f(x) - {\underline d}(x)=\sup_{y\in X\setminus \{x\}}(d(x,y)- {\underline d}(x) -  {\underline d}(y)+  {\underline d}(y)-f(y))$$
 $$=  \sup_{y\in X \setminus \{x\}}(d^{-} (x,y) - f_-(y) )=  \sup_{y\in X  }(d^{-} (x,y) - f_-(y) ) . $$
The last equality holds because its left-hand side is non-negative (being equal to $f_-(x)$) while $d^{-} (x,x) - f_-(x)=  - f_-(x)\leq 0$.  Thus, $f_- \in T(X, d^{-})$. By Lemma~\ref{psedotight11},  $f_-=gp$ for some $g\in T(X_1)$. Then $f=f_-+ \underline d=\hat g$.
 \end{proof}

 % We show next that this subset is closed in $T(X)$.

\subsection{The trimming filtration}\label{The trimming filtration} From now on, we  view $T(t(X))=T(X_1)$ as a metric subspace of $T(X)$ via the  embedding from  Theorem~\ref{tight11}. Iterating the inclusion   $T(X) \supset T(X_1)$, we obtain a filtration
\begin{equation}\label{filt} T(X) \supset T(X_1) \supset T(X_2) \supset T(X_3) \supset \cdots  \end{equation}
 where   $t^n(X)= (X_n, d_n)$ for all $n \geq 1$.  In   \eqref{filt}, we identify  each $T(X_n)$   with its image under the metric embedding  $T(X_n) \hookrightarrow T(X)$ obtained as the composition of the  embeddings $T(X_n) \hookrightarrow T(X_{n-1}) \hookrightarrow  \cdots \hookrightarrow  T(X)$. This composition carries any   $f\in T(X_n)$ to the function $\hat f\colon X\to \RR_+$ defined by $\hat f( x)= f( x_{(n)})+ \sigma^{n}  (x ) $ for $x\in X$.
  Clearly,    $ \hat f \geq \sigma^{n}   $. An   induction on~$n$ deduces from Theorem~\ref{tight11} that    $$T(X_n)=\{g \in T(X)\, \vert \, g  \geq \sigma^{n}\} \subset T(X).$$

\begin{theor}\label{tight11coroll} All  terms of the filtration \eqref{filt} are closed subsets of $T(X)$.
  \end{theor}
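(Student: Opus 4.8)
The plan is to show each $T(X_n)$ is closed in $T(X)$ by using the explicit description $T(X_n)=\{g\in T(X)\mid g\geq\sigma^n\}$ established just before the statement, together with the fact that the functions $\sigma^n\colon X\to\RR_+$ are fixed (independent of the point of $T(X)$). First I would recall that $T(X)$ carries the sup-metric $d_T(f,g)=\sup_{x\in X}|f(x)-g(x)|$, so convergence of a sequence (or net) $g_i\to g$ in $T(X)$ forces pointwise convergence $g_i(x)\to g(x)$ for every $x\in X$. Hence if each $g_i$ lies in $T(X_n)$, i.e.\ $g_i(x)\geq\sigma^n(x)$ for all $x$, then passing to the limit gives $g(x)\geq\sigma^n(x)$ for all $x$, so $g\geq\sigma^n$ and therefore $g\in T(X_n)$. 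Since limits of convergent nets stay in the set, $T(X_n)$ is closed. Equivalently, one can phrase this as: for each fixed $x\in X$ the evaluation map $\operatorname{ev}_x\colon T(X)\to\RR_+$, $f\mapsto f(x)$, is $1$-Lipschitz hence continuous, so $\{f\in T(X)\mid f(x)\geq\sigma^n(x)\}=\operatorname{ev}_x^{-1}([\sigma^n(x),\infty))$ is closed, and $T(X_n)=\bigcap_{x\in X}\operatorname{ev}_x^{-1}([\sigma^n(x),\infty))$ is an intersection of closed sets, hence closed.

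The two ingredients I would state explicitly and then invoke are: (1) the identification $T(X_n)=\{g\in T(X)\mid g\geq\sigma^n\}$, which is exactly the displayed formula obtained by induction from Theorem~\ref{tight11} in the preceding subsection; and (2) continuity of the evaluation maps, which is immediate from the definition \eqref{metric} of $d_T$ since $|f(x)-g(x)|\leq d_T(f,g)$. I would also note the boundary cases trivially: if $X_n$ (equivalently $t^n(X)$) is a singleton, then $T(X_n)$ is a single point of $T(X)$, namely $\sigma^n=\underline{d_{n-1}}$-shifted, which is certainly closed; and $T(X_0)=T(X)$ is closed in itself.

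I do not expect a genuine obstacle here: the statement is essentially a formal consequence of the sup-metric topology plus the level-set description of the filtration terms. The only point requiring a word of care is that the description $T(X_n)=\{g\in T(X)\mid g\geq\sigma^n\}$ presupposes the series/partial sums $\sigma^n$ are finite-valued, which they are by definition \eqref{almotropdfdfdfds---part} ($\sigma^n$ is a finite sum of the nonnegative functions $\underline{d_k}$, each finite since the $d_k$ are genuine metrics). With that noted, the proof is a two-line argument: evaluation maps are continuous, so the sublevel-type condition $g\geq\sigma^n$ cuts out a closed set.
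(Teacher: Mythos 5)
Your proposal is correct and is essentially the paper's own argument: both rest on the level-set description of the filtration terms coming from Theorem~\ref{tight11} together with the observation that, in the sup-metric, the condition $g\geq h$ for a fixed function $h$ cuts out a closed set (the paper phrases this as an open-ball argument showing the complement of $T(X_1)$ is open and then inducts, while you invoke the description $T(X_n)=\{g\in T(X)\mid g\geq\sigma^n\}$ directly and use continuity of the $1$-Lipschitz evaluation maps). The difference is purely one of packaging, so no further comparison is needed.
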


 \begin{proof} It suffices to show that the set $T(X_1)\subset T(X)$ is   closed in $T(X)$ and to apply induction. We prove that the complementary set $U=T(X) \setminus T(X_1)$ is open in $T(X)$. By Theorem~\ref{tight11}, for any $f \in U$, there is  $a\in X$ such that $0 \leq f(a) <{\underline d}(a)$.   We claim that the open ball $B\subset T(X)$ with center~$f$ and radius $r= {\underline d}(a)- f(a)  >0 $  is contained in~$U$. Indeed, if $g\in B$, then $$r>d_T(f,g)=\sup_{x\in X}\vert f(x)-g(x)\vert\geq g(a)-f(a).$$
 Therefore $g(a)< f(a)+r ={\underline d}(a)$. Thus,    $g \ngeq \underline d$ and so $g\in U$. We conclude that~$U$ is open   and   $T(X_1) $ is   closed.
 \end{proof}

 \subsection{The metric space $\tau$} Consider the set
 $$\tau=\tau(X)=\cap_{n\geq 1} \, T(X_n) \subset T(X) $$
and endow it with the metric  obtained by restricting the one in $T(X)$.

  %  \begin{theor}\label{Xinfinity}  If $\card(X_\infty) \geq 2$, then there is an isometry   $
%   \tau(X) \approx T(X_\infty)$. If $\card(X_\infty) =1$, then two cases may occur: either (a)   the series  \eqref{almotropdfdfdfds---} does not converge for at least one $x\in X$ and
%   $\tau(X)=\emptyset$; or (b) the series  \eqref{almotropdfdfdfds---} converges for all $x\in X$, defines a function $\sigma:X\to \RR_+$, and  $\tau(X)=\{\sigma\}$.
%  \end{theor}

   \begin{theor}\label{Xinfinity}  If either $\card(X_\infty) \geq 2$ or $\card(X_\infty) =1$  and the series  $\sigma(x)$ converges for all $x\in X$, then there is an isometry   $
   \tau \approx T(X_\infty)$. If $\card(X_\infty) =1$  and the series  $\sigma(x)$ does not converge  for some $x\in X$, then $\tau=\emptyset$.
  \end{theor}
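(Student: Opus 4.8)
The plan is to build the isometry $\tau \approx T(X_\infty)$ directly from the explicit description of the elements of $T(X_n)$ obtained in Section~\ref{The trimming filtration}, namely $T(X_n)=\{g\in T(X)\,\vert\, g\geq \sigma^n\}$. Intersecting over all $n$, an element of $\tau$ is precisely an $f\in T(X)$ with $f\geq \sigma^n$ for every $n\geq 1$; equivalently, with $f(x)\geq \sup_n \sigma^n(x)=\sigma(x)$ for all $x\in X$ (using that the partial sums $\sigma^n(x)$ are nondecreasing in $n$). So the first step is to observe that $\tau\neq\emptyset$ forces $\sigma(x)<\infty$ for all $x$: if $f\in\tau$ then $f(x)\geq\sigma(x)$, so the series converges. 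Combined with Lemma~\ref{sigsig+} (which already gives convergence when $\card(X_\infty)\geq 2$), this pins down exactly when $\tau$ can be nonempty and yields the last sentence of the theorem: if $\card(X_\infty)=1$ and $\sigma(x)$ diverges for some $x$, then $\tau=\emptyset$.

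Assuming now that $\sigma(x)$ converges for all $x\in X$ (whether $\card(X_\infty)=1$ or $\geq 2$), the next step is to set up the candidate maps. Given $f\in\tau$, define $f_\infty\colon X_\infty\to\RR_+$ by choosing, for each class $x_{(\infty)}$, a representative $x$ and setting $f_\infty(x_{(\infty)})=f(x)-\sigma(x)$; I must check this is well-defined, i.e. independent of the representative. If $x\cong y$ with meeting index $m$, then since $f\in T(X)\supset$ all $T(X_k)$ and $f\geq\sigma^k$ for all $k$, minimality of $f$ (the minimality property recalled after \eqref{metric}) forces $f$ restricted appropriately to behave like $\sigma^m$ on such pairs — more precisely, the description $\hat f(x)=f(x_{(n)})+\sigma^n(x)$ for $f\in T(X_n)$, together with $x_{(m)}=y_{(m)}$, should give $f(x)-\sigma^m(x)=f(y)-\sigma^m(y)$, and since $\sigma(x)-\sigma^m(x)=\sigma(y)-\sigma^m(y)$ (the tails agree beyond the meeting index), we get $f(x)-\sigma(x)=f(y)-\sigma(y)$. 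Conversely, given $h\in T(X_\infty)$, define $\Phi(h)\colon X\to\RR_+$ by $\Phi(h)(x)=h(x_{(\infty)})+\sigma(x)$; the bulk of the work is to verify $\Phi(h)\in T(X)$ using Conditions $(\ast)'$, $(\ast\ast)'$, splitting into the cases $x\cong y$ and $x\ncong y$ exactly as in the proof of Theorem~\ref{tight11}, now invoking Lemma~\ref{sigsig} parts (i) and (ii) respectively in place of the one-step identities. One then checks $\Phi(h)\geq\sigma$, hence $\Phi(h)\in\tau$, and that $f\mapsto f_\infty$ and $\Phi$ are mutually inverse and isometric, the isometry statement reducing to $\sup_{x\in X}\vert\Phi(h)(x)-\Phi(h')(x)\vert=\sup_{x\in X}\vert h(x_{(\infty)})-h'(x_{(\infty)})\vert=\sup_{u\in X_\infty}\vert h(u)-h'(u)\vert$ since $x\mapsto x_{(\infty)}$ is onto.

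The main obstacle I anticipate is the verification that $\Phi(h)$ satisfies the approximation condition $(\ast\ast)'$ when $\card(X_\infty)=1$, where $d_\infty$ degenerates and the argument cannot simply quote Lemma~\ref{sigsig}(ii). In that case $X_\infty$ is a single point, $T(X_\infty)=\{0\}$, and the claim reduces to showing $\sigma\in T(X)$: one must produce, for each $x$ and $\varepsilon>0$, a $y\ncong$-trivially-related point — but here all points are $\cong$-equivalent, so one uses Lemma~\ref{sigsig}(i): $d(x,y)=\sigma^{m(x,y)}(x)+\sigma^{m(x,y)}(y)$, and since $\sigma(x)-\sigma^{m}(x)=\sum_{k\geq m}\underline{d_k}(x_{(k)})$ can be made smaller than $\varepsilon/2$ by choosing $y$ with large meeting index $m(x,y)$ (possible precisely because $\sigma(x)$ converges), we get $\sigma(x)+\sigma(y)\leq d(x,y)+\varepsilon$. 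The existence of $y$ with arbitrarily large meeting index needs a small argument: if meeting indices were bounded by some $N$, then $X_N$ would be a singleton and the tails $\underline{d_k}(x_{(k)})$ for $k\geq N$ would be determined; one checks this still forces $\sigma(x)-\sigma^N(x)\to$ the same tail, so choosing $y=x$ is illegitimate but the bound on meeting indices combined with convergence still yields the estimate — this edge case is where I would be most careful. The remaining verifications (well-definedness, bijectivity, isometry) are routine bookkeeping given the machinery already in place.
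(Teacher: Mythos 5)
Your outline follows essentially the same route as the paper: the characterization $T(X_n)=\{g\in T(X)\,\vert\, g\geq \sigma^n\}$ gives $\tau=\{f\in T(X)\,\vert\, f\geq\sigma\}$ and hence $\tau=\emptyset$ whenever $\sigma(x)$ diverges for some $x$; the mutually inverse maps $f\mapsto f-\sigma$ (descended to $X_\infty$) and $h\mapsto h(x_{(\infty)})+\sigma(x)$ are exactly the paper's $g\mapsto f$ and $f\mapsto\tilde f$; the verification of $(\ast)'$ and $(\ast\ast)'$ splits into $x\cong y$ and $x\ncong y$ via Lemma~\ref{sigsig}, as in the paper; and the well-definedness of $f-\sigma$ on $\cong$-classes is argued the same way (equality of $f-\sigma^m$ via the factorization through $X_m$, plus equality of the tails $\sigma-\sigma^m$). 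The only substantive point left open is the one you flag yourself: the case $\card(X_\infty)=1$, where the theorem reduces to $\sigma\in T(X)$ and Condition $(\ast\ast)$ must be checked without recourse to $d_\infty$.

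Your treatment of that case is incomplete precisely where the meeting indices $k_y=m(x,y)$ are bounded over $y\in X$. The resolution is short but you do not quite state it: if $\{k_y\}_{y\in X}$ is bounded, then (excluding the trivial case $\card(X)=1$, where $\sigma=0$) some $X_N$ is a singleton, so there is a smallest $K\geq 1$ with $\card(X_K)=1$ and $\card(X_{K-1})\geq 2$. Since $\underline{d_l}$ vanishes identically on a one-point space, $\underline{d_l}(x_{(l)})=0$ for all $l\geq K$, i.e.\ the tail is not merely ``determined'' but zero, so $\sigma=\sigma^{K}$. Choosing $y$ with $y_{(K-1)}\neq x_{(K-1)}$ (possible by surjectivity of $X\to X_{K-1}$) forces $m(x,y)=K$, and Lemma~\ref{sigsig}(i) gives $\sigma(x)+\sigma(y)=\sigma^{K}(x)+\sigma^{K}(y)=d(x,y)$, so $(\ast\ast)$ holds with $\varepsilon=0$. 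In the unbounded case your argument is the paper's: pick $y$ with $k_y$ large enough that the tail $\sigma(x)-\sigma^{k_y}(x)<\varepsilon/2$, note the tails of $x$ and $y$ beyond $k_y$ coincide, and conclude $\sigma(x)+\sigma(y)<d(x,y)+\varepsilon$. One further item you defer to ``routine bookkeeping'' that the paper spells out: for surjectivity you must actually verify that $f_\infty=f-\sigma$ satisfies $(\ast)$ and $(\ast\ast)$ on $(X_\infty,d_\infty)$, not only that $\Phi(f_\infty)=f$; the computations are the reverse of the ones you already describe, using \eqref{almotrops41} when $x_{(\infty)}\neq y_{(\infty)}$ and \eqref{sdf} when $x_{(\infty)}=y_{(\infty)}$.
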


 \begin{proof}  Recall that $X_\infty =X/\cong$ where $\cong $ is the equivalence relation in~$X$ defined in Section~\ref{Trimming}. We start with the case $\card(X_\infty) \geq 2$. By  Lemma~\ref{sigsig+},   the series $\sigma(x)$ converges for all $x\in X$ and yields a function $\sigma \colon X \to \RR_+$.
For any function $f:   X_\infty \to \RR_+$, we define a function $\tilde f : X\to \RR_+$ by $\tilde f (x) = f(x_{(\infty)}) +\sigma(x)$. We claim that if $f\in T(X_\infty)$, then $\tilde f \in T(X)$.
  Note that $\card(X)\geq \card (X_\infty) \geq 2$. Therefore to verify the inclusion $\tilde f \in T(X)$, it suffices to verify for $\tilde f$ Conditions $(\ast)'$ and $(\ast \ast)'$ from Section~\ref{Basics}. To verify $(\ast)'$, pick any distinct $x,y \in X$. If $x\cong y$, then Formula~\eqref{almotrops41aaas} implies
  that \begin{equation}\label{sdf} d(x,y) \leq \sigma(x) +\sigma(y) \leq \tilde f(x) +\tilde f (y). \end{equation} %and $(\ast)'$  follows from the inequality $\sigma \leq \tilde f$.
  If $x\ncong y$, then $x_{(\infty)} \neq y_{(\infty)}$.  Formula~\eqref{almotrops41} and Condition  $(\ast)'$ on~$f$ give
  $$d(x,y) \leq f(x_{(\infty)}) + f(y_{(\infty)}) +\sigma(x) +\sigma(y)= \tilde f(x) + \tilde f(y).$$
  To verify $(\ast\ast)'$, pick any   $x  \in X$ and $\varepsilon >0$. Since  $f: X_\infty \to \RR_+$ satisfies $(\ast\ast)'$ and the projection $X\to X_\infty$ is onto, there is $y\in X$ such that $x_{(\infty)} \neq y_{(\infty)}$ and $$    f(x_{(\infty)})+   f(y_{(\infty)}) \leq d_\infty (x_{(\infty)}, y_{(\infty)}) +\varepsilon.$$ Then  $x\ncong y$ and by \eqref{almotrops41},
  $$\tilde f(x) + \tilde f(y) \leq   d_\infty (x_{(\infty)}, y_{(\infty)}) +\varepsilon +\sigma(x) +\sigma(y)=d(x,y) +\varepsilon.$$
Thus, $\tilde f \in T(X)$. The     map  $T(X_\infty)\to T(X), f\mapsto \tilde f$ is a metric embedding:
$$d_T(\tilde f_1, \tilde f_2)= \sup_{x\in X}\vert \tilde f_1(x)-\tilde f_2(x)\vert
=  \sup_{x\in X}\vert     f_1(x_{(\infty)})-  f_2(x_{(\infty)})\vert$$
$$=\sup_{z\in X_\infty}\vert   f_1(z)-  f_2(z)\vert
=(d_\infty)_T(  f_1,   f_2) $$
for any $f_1,f_2 \in T(X_\infty)$.
Also,  for all $n\geq 1$, the obvious inequalities $\tilde f \geq \sigma \geq \sigma^n$  imply that $\tilde f \in T(X_n)\subset T(X)$. So, $\tilde f \in \tau$.

It remains to show  that for each   $g \in \tau$ there is $f\in T(X_\infty)$ such that $g=\tilde f$.  By Theorem~\ref{tight11}, the function $g-\sigma^1=g-\underline d $ is the composition of the projection $X\to t(X)$ and a certain  function $g_1\in T(X_1)$. Proceeding by induction, we deduce that for each $n\geq 1$, the function $g-\sigma^n $ is the composition of the projection $X\to X_n$ and a    function $g_n\in T(X_n)$. Thus,  $g \geq \sigma^n$ for all $n \geq 1$ and therefore $g\geq \sigma$. We check now  that the function $g-\sigma \geq 0$ is the composition of the projection $X\to X_\infty$ and a    function   $X_\infty \to \RR_+ $. If two points $x,y \in X$ project to the same point of $X_\infty$, then   there is an integer $n\geq 0$ such that $x_{(n)}=y_{(n)}$. Then      $$(g - \sigma^n)(x) = g_n(x_{(n)})=g_n(y_{(n)})=(g - \sigma^n)(y).$$ Also,  $x_{(s)}=y_{(s)}$   for all  $s\geq n$ and so  $$(\sigma-\sigma^n) (x)= \sum_{s\geq n}  \underline{d_s} (x_{(s)} )=  \sum_{s\geq n}  \underline{d_s} (y_{(s)} )=(\sigma-\sigma^n) (y). $$
Therefore,  $(g- \sigma)(x) =  (g- \sigma)(y)$. Thus, the function $g-\sigma \geq 0$
is the composition of the projection $X\to X_\infty$ and a    function $f: X_\infty \to \RR_+$. So, $g=\tilde f$. We claim that $f \in T(X_\infty)$. To see it, we verify that $f$ satisfies Conditions
$(\ast)$ and $(\ast \ast)$ from Section~\ref{Basics}. To verify $(\ast)$, pick any   $x,y \in X$. If  $x_{(\infty)} \neq y_{(\infty)} $, then
  $$f(x_{(\infty)} )+f(y_{(\infty)} )= g(x) - \sigma (x) + g(y)-\sigma(y) $$
$$\geq d(x,y) - \sigma (x)  -\sigma(y)=  d_{\infty}(x_{(\infty)},y_{(\infty)})$$
where  the inequality follows from the assumption $g\in T(X)$ and the last equality holds by \eqref{almotrops41}. If   $x_{(\infty)} = y_{(\infty)} $, then
  $$f(x_{(\infty)} )+f(y_{(\infty)} ) \geq 0=  d_{\infty}(x_{(\infty)},y_{(\infty)}).$$ To verify $(\ast\ast)$, pick any   $x  \in X$ and $\varepsilon >0$. Since $g\in T(X)$, there is $y\in X$ such that  $    g(x)+   g(y) \leq d(x,y) +\varepsilon$. Then
$$f(x_{(\infty)} )+f(y_{(\infty)} )= g(x) - \sigma (x) + g(y)-\sigma(y)  \leq d(x,y) +\varepsilon - \sigma (x)  -\sigma(y).$$
If  $x_{(\infty)} \neq y_{(\infty)} $, then the right-hand side is equal to $d_{\infty}(x_{(\infty)},y_{(\infty)}) +\varepsilon$ and we are done.
If   $x_{(\infty)} = y_{(\infty)} $, then the right-hand side is  smaller than or equal to $\varepsilon= d_{\infty}(x_{(\infty)},y_{(\infty)}) +\varepsilon$ as follows from
\eqref{sdf}.

Assume  that $\card(X_\infty) =1$, i.e., that $X_\infty$ is a singleton. If  $\tau \neq \emptyset$, then any $f\in  \tau=\cap_{n\geq 1}\,  T^n(X) $   majorates all the functions $\{  \sigma^{n} \}_n$ on~$X$.    Therefore    the series   $\sigma(x)$ converges for all $x\in X$. This  implies the last claim of the theorem.   Suppose now that   the series  $\sigma(x)$ converges for all $x\in X$. It defines then a function $\sigma:X \to \RR_+$. Below we prove that
  $\sigma \in T(X)$. This will imply the claim of the theorem. Indeed, since $\sigma\geq \sigma^n$ for all   $n\geq 1$, we   have then $\sigma \in \tau $. Any $ f\in \tau  $ satisfies   $f\geq \sigma^n$ for all ${n\geq 1}$ and so $f \geq \sigma$.  By the minimality of $f\in T(X)$ (see Section \ref{Basics}), the   inequality  $f \geq \sigma$ implies that $f=\sigma$.  Thus,   $ \tau=\{\sigma\}$ is isometric to $T(X_\infty)=\{0\}$.  To prove the inclusion $\sigma \in T(X)$, we verify   Conditions $(\ast)$ and $(\ast \ast)$ from Section~\ref{Basics}. The distance $d(x,y)$ between any points $x,y \in X$ may be computed  from the equality  $d(x,y)=\rho(x,y)$, where~$\rho$ is the pseudometric in the trimming cylinder of~$X$, and the expression~\eqref{almotr--} for $\rho(x,y)$ where $k=0$ and $m=m(x,y)$.  This gives $$d(x,y)=\sigma^{m}(x)+\sigma^{m}(y) \leq \sigma(x) +\sigma(y)$$
 which is   Condition~$(\ast)$ for~$\sigma$.  To check $(\ast \ast)$ for~$\sigma$, pick any $x\in X$ and $\varepsilon >0$. The assumption $\card(X_\infty) =1$ ensures that for every $y\in X $,  there is an integer  $k  \geq 0$ such that $x_{(k)}=y_{(k)}$. Let $k_y$ be the smallest such integer. If the   set of  integers $\{k_y \geq 0\}_{y\in X}$ is finite, then either $\card (X)=1$ or there is an integer $K\geq 1$ such that $\card (X_{K-1})\geq 2$ and  $\card (X_K)=1$. If $\card (X)=1$,  then   $\sigma=0\in T(X)=\{0\}$.  If $\card (X_{K-1})\geq 2$ and $\card (X_K)=1$, then $\sigma=\sigma^{K}$ and  there is $y\in X\setminus \{x\}$, such that $y_{(K-1)} \neq x_{(K-1)}$ and $y_{(K)} = x_{(K)}$. Then $$\sigma(x)+ \sigma(y)=\sigma^{K} (x) +\sigma^{K} (y)=\rho(x,y)=d(x,y)\leq d(x,y) +\varepsilon.$$
 If   the   set   $\{k_y\}_{y\in X}$ is infinite, then we can find $y\in X\setminus \{x\}$ with $k_y$ so big that $\sigma(x) - \sigma^{k_y}(x)<\varepsilon/2$. The equality $y_{(k_y)} = x_{(k_y)}$ implies   that
 $$\sigma(y) - \sigma^{k_y}(y)= \sigma(x) - \sigma^{k_y}(x) <\varepsilon/2 .$$
 Then $$ \sigma (x)+ \sigma (y) < \sigma^{k_y}(y)+ \sigma^{k_y}(x) +\varepsilon =\rho(x,y) +\varepsilon= d(x,y) +\varepsilon.$$ Here the   equality in the middle follows from the definition of the path pseudometric  $\rho$ and the conditions $y_{(k_y-1)} \neq x_{(k_y-1)}$, $ y_{(k_y)} = x_{(k_y)}$. Thus, Condition~$(\ast \ast)$ also holds for~$\sigma$. So, $\sigma \in T(X)$. \end{proof}

 \section{Main theorem}

 \subsection{Main theorem} We     state our main result  on  the tight span $T(X)$  of a metric space $X=(X,d)$. Recall the subspaces $\overline C$ and $\tau$ of   $T(X)$. Recall   the special components of the trimming cylinder $C=(C, \rho)$ and the  roots of their projections  to~$\overline C$. The set of   these roots is denoted   ${\overline C}_\bullet$.

\begin{theor}\label{cyllibb+}  For any metric space $X=(X,d)$,
 $$T(X)= \tau \cup {\overline C } \quad {\text and} \quad  \tau \cap {\overline C }= {\overline C }_\bullet.$$
  \end{theor}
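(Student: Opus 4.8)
The plan is to prove the two statements separately, using the explicit descriptions of $\tau$ and $\overline C$ inside $T(X)$ obtained in the previous sections. Recall from Section~\ref{The trimming filtration} that $T(X_n)=\{g\in T(X)\,\vert\, g\geq\sigma^n\}$, so that $\tau=\{g\in T(X)\,\vert\, g\geq\sigma^n \text{ for all } n\geq 1\}$, and recall from Corollary~\ref{cyllicorol} that $\overline C$ is the image of the distance preserving map $C\to T(X)$ sending $a\in C$ to the function $f_a=\rho(a,-)\vert_X$. First I would establish the inclusion $\overline C_\bullet\subseteq\tau\cap\overline C$: a root of a special component $\overline L$ is the image of a special vertex $x_{(k)}$ of $C$, and for such a vertex $\underline{d_l}(x_{(l)})=0$ for all $l\geq k$, hence $f_{x_{(k)}}(y)=\rho(y,x_{(k)})=\sigma^{\min(k,m)}(y)+\cdots$; more precisely, using Lemma~\ref{sigsig} and the formula $\sigma(x_{(k)})=\sum_{s\geq k}\underline{d_s}(x_{(s)})=0$, one checks that $f_{x_{(k)}}=\sigma$ on $X$ when restricted appropriately, and in any case $f_{x_{(k)}}\geq\sigma^n$ for all $n$, so $f_{x_{(k)}}\in\tau$. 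Conversely, to show $\tau\cap\overline C\subseteq\overline C_\bullet$, take $a\in C$ with $f_a\in\tau$; then $f_a\geq\sigma^n$ for all $n$, which by Lemma~\ref{sigpointssig} and the monotonicity of $\sigma$ forces $\sigma(a)=0$ (since $f_a(x)=\rho(x,a)=\sigma(x)-\sigma(a)$ for $x$ above $a$ in the same component, and this must be $\geq\sigma^n(x)\to\sigma(x)$), and $\sigma(a)=0$ says exactly that $a$ lies on the special part of its component, so its image in $\overline C$ is the root.

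For $T(X)=\tau\cup\overline C$, the inclusion $\tau\cup\overline C\subseteq T(X)$ is automatic. For the reverse, take $f\in T(X)$; I want to show $f\in\tau$ or $f\in\overline C$. Consider the filtration \eqref{filt}: either $f\in T(X_n)$ for all $n$, in which case $f\in\tau$ and we are done, or there is a smallest $n\geq 1$ with $f\notin T(X_n)$, i.e.\ $f\geq\sigma^n$ fails while $f\geq\sigma^{n-1}$ holds (with $\sigma^0=0$). By Theorem~\ref{tight11} applied iteratively, $f=\hat g$ for some $g\in T(X_{n-1})$ with $g\ngeq\underline{d_{n-1}}$, so there is a vertex $v=x_{(n-1)}\in X_{n-1}$ with $g(v)<\underline{d_{n-1}}(v)$. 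The idea is that $f$ must then "live on the tree part" of $C$ near the edge $e_v$: I would show $f=f_a$ for the point $a$ on the edge $e_v$ at distance $g(v)$ from $v$ (measured toward $x_{(n)}$, using the homeomorphism $e_v\approx[0,\underline{d_{n-1}}(v)]$). Concretely, by the minimality property of elements of $T(X)$, it suffices to check that $f_a\in T(X)$ and $f\geq f_a$; both follow from the formulas for $\rho$ in Section~\ref{proof2} together with $g\in T(X_{n-1})$ and the relation $\hat g(x)=g(x_{(n-1)})+\sigma^{n-1}(x)$, comparing this with $f_a(x)=\rho(x,a)$.

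The main obstacle I anticipate is the last step: producing the point $a\in C$ with $f=f_a$ when $f\notin\tau$. One must pin down $a$ on the correct edge at the correct height, and the subtlety is that $g\in T(X_{n-1})$ may satisfy $g(v)<\underline{d_{n-1}}(v)$ at several vertices $v$ simultaneously, or $X_{n-1}$ may have vertices with $\underline{d_{n-1}}(v)=0$; one needs to argue that the locus $\{v: g(v)<\underline{d_{n-1}}(v)\}$, together with the heights $g(v)$, determines a single point of $C$, essentially because $g\in T(X_{n-1})$ is minimal and Condition $(\ast\ast)'$ forces $g(v)$ to be "attained" by going up the tree from $v$. I would handle this by first treating $X_{n-1}$ via Theorem~\ref{cylli} applied to $X_{n-1}$ in place of $X$: the function $g\in T(X_{n-1})$ that fails $g\geq\underline{d_{n-1}}$ corresponds under $C(X_{n-1})\hookrightarrow T(X_{n-1})$ to a point on an edge emanating from the bottom layer $X_{n-1}$ of $C(X_{n-1})$, which is canonically a sub-edge of $C(X)$; chasing this identification through the embeddings $T(X_{n-1})\hookrightarrow T(X)$ and $C(X_{n-1})\hookrightarrow C(X)$ (the latter being the obvious "truncation" of the trimming cylinder) then gives the desired $a$ and the equality $f=f_a$, completing $f\in\overline C$.
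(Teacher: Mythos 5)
Your proposal is correct and follows essentially the same route as the paper: the filtration $T(X_n)=\{g\in T(X)\,\vert\,g\geq\sigma^n\}$, Theorem~\ref{tight11} to locate a vertex $v\in X_{n-1}$ with $f$-value below $\underline{d_{n-1}}(v)$, the point $a$ on the edge $e_v$ at that height, and the minimality of elements of $T(X)$ to upgrade $f\geq f_a$ to $f=f_a$; likewise the identification of $\overline C_\bullet$ with the points $a\in C$ having $\sigma(a)=0$ is exactly how the paper handles the intersection. The only additions needed are the degenerate case $\card(X_\infty)=1$ with $\sigma(x)$ divergent (where $\sigma$ is undefined but $\tau=\overline C_\bullet=\emptyset$, so the intersection claim is vacuous and the union argument runs on the finite sums $\sigma^n$ alone), and the observation that your anticipated ``obstacle'' about several vertices $v$ with $g(v)<\underline{d_{n-1}}(v)$ is already dissolved by the minimality argument you state -- any one choice of $v$ yields $f\geq f_a$ and hence $f=f_a$.
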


  \begin{proof} We first consider the case where $\card(X_\infty) \geq 2$. Consider the distance preserving map $C\to T(X), a \mapsto f_a$ from  Theorem~\ref{cylli}. To prove the equality $T(X)= \tau \cup {\overline C }$, it suffices to show
  that each  function $f:X\to \RR_+$ belonging to $  T(X) \setminus \tau$   is equal to $f_a$ for some  $a\in C $. We prove a   stronger claim: $f=f_a$   for some $a\in C$ such that $\sigma(a)>0$ where $\sigma \colon C\to \RR_+$ is the function defined in the proof of Theorem~\ref{thmain}. Suppose first that  $f \notin  T(X_1)$. By Theorem~\ref{tight11}, there is $y\in X$ such that $0 \leq f(y) < {\underline d}(y)$.  The   edge of~$C$  connecting $y=y_{(0)}$ to $y_{(1)}$ has length   ${\underline d}(y)$ and   contains  a (unique) point~$a$
   such that $ \rho(y,a)= f(y) $.    Then $$\sigma(a)\geq \rho(a, y_{(1)})= {\underline d}(y) -f(y) >0$$ and  $f_{a}(y)=\rho(y,a)=f(y) $. For any $x\in X\setminus \{y\}$, Condition ($\ast$) in Section~\ref{Basics} and the fact that~$a$ lies between~$x$ and~$y$ in~$C$ imply that
   $$f(x) \geq d(y,x) - f(y) =\rho(y,x) - \rho(y,a)= \rho(x,a)=f_{a}(x).$$ Thus, $f\geq  f_{a}$.
    Since $f, f_{a } \in T(X)$, we conclude that   $f=f_{a }$.

    Suppose now that $ f\in  T(X_n) \setminus T(X_{n+1})$ with $n\geq 1$. By Section~\ref{The trimming filtration}, the embedding $T(X_n) \hookrightarrow T(X)$ carries any function  $ g: X_n \to \RR_+$ in $ T(X_n)$   to the function ${\hat g}:X \to \RR_+$ defined by ${\hat g}( x) = g(x_{(n)}) +\sigma^{n}(x)$ for all $x\in X$. Since $f\notin T(X_{n+1})=T(t(X_n)) \subset T(X_n)$,    Theorem~\ref{tight11} implies that there is $y\in X_n$ such that $0\leq f(y) < {\underline {d_n}}(y)$. The   edge of~$C$ connecting $y\in X_n$ to the projection of~$y$ to $X_{n+1}$  has length ${\underline {d_n}}(y)$ and   contains  a (unique) point~$a$
   such that $ \rho(y,a)= f(y) $. As above,  $\sigma(a)\geq {\underline {d_n}}(y) -f(y) >0$.   For any $x\in X$ such that $x_{(n)}=y$, the point~$y$ lies between~$x$ and~$a$ in~$C$, and therefore $${\hat f}(x)= f(y) +\sigma^{n}(x)=\rho(y,a)+\rho(x,y)=\rho(x,a)= f_a(x). $$    For any $x\in X$ such that $x_{(n)}\neq y$, the point $a$ lies between $x$ and~$y$ in~$C$. The inclusion $f\in T(X_n)$ and
  Condition ($\ast$) of Section~\ref{Basics} yield $${\hat f}(x)= f(x_{(n)}) +\sigma^{n}(x)\geq d_n(x_{(n)}, y) -f(y) +\sigma^{n}(x)=$$
  $$=\rho(x_{(n)}, y) -\rho(y,a) +\rho(x,x_{(n)})
  \geq \rho(x,y)-\rho(y,a)=\rho(x,a)= f_a(x) .$$   So, ${\hat f}\geq  f_{a}$ and,
    since ${\hat f}, f_{a } \in T(X)$, we  have   ${\hat f}=f_{a }$. 
     
     The arguments above prove the inclusion $T(X) \setminus \tau \subset {\overline C } \setminus \overline C_\bullet $. As a consequence, $T(X)= \tau \cup  {\overline C }  $
    and   $  \overline C_\bullet \subset \tau$.
It remains only to   show   that   $\tau \cap (\overline  C\setminus \overline C_\bullet )=\emptyset  $, i.e.,  that $f_a  \notin \tau  $ for  any   $a\in C$ such that $\sigma(a)>0$. Assume for concreteness that  $a=x_{(k)}$  or~$a$ lies inside the edge  of~$C$ connecting the vertices $x_{(k)}$, $x_{(k+1)}$ where $x\in X, k\geq 0$.
Let $L\subset C$ be the   component of~$C$ containing~$x$ and~$a$. The condition $\sigma(a)>0$ implies that there is $n \geq k$ such that $\underline {d_{n}} (x_{(n)})>0$. The  definition of the path pseudometric $d_L$ in~$L$    implies that   $d_L (a,x_{(n+1)})  \geq \underline {d_{n}} (x_{(n)}) >0$. It is also clear that~$a$ lies between the points~$x$ and  $x_{(n+1)}$ of~$L$. Then
 $$\sigma^{n+1}(x) = d_L (x, x_{(n+1)})=d_L(x,a) + d_L(a,x_{(n+1)}) > d_L(x,a)=\rho(x,a)=f_a(x).$$
We conclude that the function $f_a$ does not majorate the function $\sigma^{n+1}:X \to \RR_+$. By Section~\ref{The trimming filtration}, $f_a\notin T(X_{n+1} )\subset T(X)$.
Therefore, $f_a  \notin  \tau$.

Suppose now that $\card(X_\infty) = 1$ so that~$C$ is (path) connected. If the series $ \sigma(x)$ converges for all $x\in X$ (and   defines a function $\sigma:X\to \RR_+$), then   the theorem is proved exactly as in the case $\card(X_\infty)  \geq 2$.  If  the series $ \sigma(x)$ does not converge  for some $x\in X$, then~$C$ is non-special and ${\overline C }_\bullet =\emptyset$. The equality $T(X)=\tau \cup \overline C$  is proved then as in the case   $\card(X_\infty)  \geq 2$ suppressing all references to ${\overline C }_\bullet, \sigma$. Also,     $\tau= \emptyset$ by Theorem~\ref{Xinfinity} and so $ \tau \cap {\overline C }=\emptyset = {\overline C }_\bullet$. \end{proof}

\subsection{Remarks} 1. When~$C$ is (path) connected, i.e.,  $\card(X_\infty) = 1$, we can explicitly compute the sets~$\tau$ and ${\overline C }_\bullet$.
We consider three   cases:

(i) (the only component of) $C$ is special, or

(ii) $C$ is not special but the series $ \sigma(x)$ converges for all $x\in X$ and defines a function $\sigma:X\to \RR_+$, or

(iii) the series $ \sigma(x)$ does not converge  for some $x\in X$.

In the cases (i) and (ii),  the proof of Theorem~\ref{Xinfinity} shows that  $\tau=\{\sigma\}$. In the case (iii), the same theorem gives  
$\tau =\emptyset$. The set $ \overline C_\bullet$ is computed from the definitions using the connectedness of~$C$. Namely, $ \overline C_\bullet =\{\sigma\} $ in   the case (i) and    $\overline C_\bullet= \emptyset$ in the cases (ii) and (iii).

2. For  $x\in X $, let $L_x$ be the connected    component  of the trimming cylinder $C=C(X)$ containing   $x\in X=X_0\subset C$. If the pseudometric tree $L_x$ is special, then   the root of its metric quotient $\overline {L_x}    $  belongs to the set ${\overline C }_\bullet \subset  \tau  $. Comparing the canonical   embedding $\overline C\hookrightarrow T(X)$ with the isometry $T(X_\infty)=\tau  $, one can check that  the root in question is  the point $x_{(\infty)} \in X_\infty \subset T(X_\infty)=\tau $.


\begin{thebibliography}{CJKLS}

%
%                     \bibitem[Bu]{Bu} P. Buneman, A note on the metric properties of trees, J. Combinatorial Theory Ser. B 17 (1974), 48--50.

  %        \bibitem[CC]{CC}          X.  Chen, V. Chv\'atal,
%Problems related to a de Bruijn-Erd\"os theorem.
%Discrete Appl. Math. 156 (2008), no. 11, 2101--2108.
%
% \bibitem[Ch]{Ch}  V. Chv\'atal,
%Sylvester-Gallai theorem and metric betweenness.
%Discrete Comput. Geom. 31 (2004), no. 2, 175--195.
%


\bibitem[BCK]{BCK} A. H. Bilge, D. \c{C}elik, \c{S}. Ko\c{c}ak, An equivalence class decomposition of finite metric spaces via Gromov products. Discrete Mathematics  340:8 (2017),  1928--1932.

\bibitem[Dr]{Dr} A. Dress,
Trees, tight extensions of metric spaces, and the cohomological dimension of certain groups: a note on combinatorial properties of metric spaces.
Adv. in Math. 53 (1984),   321–402.

\bibitem[DMT]{DMT} A. Dress, V.  Moulton, W. Terhalle,
T-theory: an overview.
European J. Combin. 17 (1996),  161–175.


\bibitem[Is]{Is} J. R. Isbell,
Six theorems about injective metric spaces.
Comment. Math. Helv. 39 (1964),  65–76.





 %\bibitem[Li]{Li}  N. Linial,
%Finite metric spaces - combinatorics, geometry and algorithms.   Proc. Int. Congress of Mathematicians, Vol. III (Beijing, 2002), 573--586, Higher Ed. Press, Beijing, 2002.
%




\bibitem[Me]{Me} K. Menger, {\it Untersuchungen ¨uber allgemeine metrik,}  Math. Ann. 100 (1928), 75--163.

%\bibitem[SS]{SS} C. Semple, M. Steel, Phylogenetics, Oxford Lecture Series in Mathematics and its Applications,
%vol. 24, Oxford University Press, Oxford, 2003.

\bibitem[Tu]{Tu} V. Turaev,
Trimming of finite metric spaces.
 	arXiv:1612.06760.

%\bibitem[Ve]{Ve} A.M.Vershik, {\it  Classification of finite metric spaces and combinatorics of convex %polytopes,}     Arnold Math. J. 1 (2015), no. 1, 75--81.


                     \end{thebibliography}
                     \end{document}